\def\@citecolor{blue}
\def\@urlcolor{blue}
\def\@linkcolor{blue}
\def\theequation{\thesection.\@arabic \c@equation}
\def\@citecolor{blue}
\def\@urlcolor{blue}
\def\@linkcolor{blue}
\def\theenumi{\@roman\c@enumi}
\theoremstyle{plain}
\newtheorem{theorem}[equation]{Theorem}
\newtheorem*{definition*}{Definition}
\newtheorem{lemma}[equation]{Lemma}
\newtheorem{corollary}[equation]{Corollary}
\newtheorem{proposition}[equation]{Proposition}
\newtheorem*{remark*}{Remark}
\theoremstyle{definition}
\newtheorem{remark}[equation]{Remark}
\newtheorem{remarks}[equation]{Remarks}
\newtheorem{example}[equation]{Example}
\newtheorem{definition}[equation]{Definition}
\def\NZQ{\mathbb}               
\def\QQ{{\NZQ Q}}
\def\ZZ{{\NZQ Z}}
\def\RR{{\NZQ R}}
\def\frk{\mathfrak}               
\def\mm{{\frk m}}
\def\opn#1#2{\def#1{\operatorname{#2}}} 
\opn\supp{supp}
\opn\chara{char}
\opn\length{\ell}
\opn\projdim{proj\,dim}
\opn\depth{depth}
\opn\reg{reg}
\opn\lreg{lreg}
\opn\sat{^{sat}}
\opn\lex{^{lex}}
\opn\SK{Sk}
\opn\Char{char}
\opn\Ker{Ker}
\opn\Coker{Coker}
\opn\Im{Im}
\opn\Hom{Hom}
\opn\Tor{Tor}
\opn\Ext{Ext}
\opn\End{End}
\opn\Aut{Aut}
\opn\id{id}
\opn\GL{GL}
\opn\Gin{Gin}
\opn\gin{gin}
\opn\Hilb{Hilb}
\opn\HilbPol{HilbPol}
\opn\ini{in}
\opn\End{end}
\begin{document}

\title{Zero-generic initial ideals}
\author{Giulio Caviglia}
\address{Giu\-lio Ca\-vi\-glia - Department of Mathematics -  Purdue University - 150 N. University Street, West Lafayette - 
  IN 47907-2067 - USA}
\email{gcavigli@math.purdue.edu}
\author{Enrico Sbarra}
\address{Enrico Sbarra - Dipartimento di Matematica - Universit\`a degli Studi di Pisa - Largo Bruno Pontecorvo 5 - 56127 Pisa - Italy}
\email{sbarra@dm.unipi.it}
\thanks{The work of the first author was supported by a grant from the
Simons Foundation (209661 to G. C.)}
\subjclass[2010]{Primary 13P10, 13D02, 13D45;  Secondary 13A02}

\begin{abstract}
Given a homogeneous ideal $I$ of a polynomial ring $A={\mathbb{K}}[X_1,\dots,X_n]$ and a monomial order $\tau$, we construct a new monomial ideal of $A$ associated with $I$. We call it the zero-generic initial ideal of $I$ with respect to $\tau$  and denote it with $\gin_0(I)$, or with $\Gin_0(I)$ whenever $\tau$ is the reverse-lexicographic order. When $\chara \mathbb{K}=0$, a zero-generic initial ideal is the usual generic initial ideal.
We show that $\Gin_0(I)$ is  endowed with many interesting properties, some of which are easily seen, e.g., 
it is a strongly stable monomial ideal in any characteristic, and has the same Hilbert series as $I$; some other properties are less obvious: it shares with  $I$ the same  Castelnuovo-Mumford regularity, projective dimension and extremal Betti numbers. Quite surprisingly, $\gin_0(I)$ also satisfies  Green's Crystallization Principle, which is known to fail in positive characteristic. Thus, zero-generic initial ideals can be used as  formal analogues of  generic initial ideals computed in characteristic 0.\\
We also prove the analogue for local cohomology of Pardue's Conjecture for Betti numbers: we show that the Hilbert functions of local cohomology modules of a quotient ring of a polynomial ring modulo a weakly stable ideal are independent of the characteristic of the coefficients field. 
\end{abstract}
\keywords{Crystallization Principle, Generic initial ideals, Castelnuovo-Mumford regularity, Pardue's Conjecture}
\date{\today}

\maketitle

\section*{Introduction and Notation}

After the founding paper of Galligo \cite{ Ga} and the results of \cite{BaSt}, generic initial ideals have become a central topic in Commutative Algebra. They are the subject of dedicated chapters in books and monographs cf. \cite{Ei}, \cite{HeHi2}, \cite{Gr} and of many research papers, for instance \cite{ArHeHi},  \cite {ChChPa}, \cite{Co}, \cite{CoSi}, \cite{CoR\"o}, \cite{Mu}, \cite{Mu2}, \cite{MuHi} and \cite{MuPu}, with topics ranging from Algebraic Geometry to Combinatorial and Computational Commutative Algebra. 
One of the main reasons why generic initial ideals have been studied so extensively in the literature after the work of Bayer and Stillman is that, when computed with respect to the reverse lexicographic order, they preserve many important invariants including the Castelnuovo-Mumford regularity. Furthermore several geometrical properties of projective varieties are encoded by  generic initial ideals, especially when computed with respect to the lexicographic order, as shown in \cite{Gr}, \cite{CoSi}, \cite{AhKwSo}, \cite{FlGr} and \cite{FlSt}.

Let $\mathbb{K}$ be any field, $I$ a homogeneous ideal of the standard graded polynomial ring $\mathbb{K}[X_1,\ldots,X_n]$, and $\tau$ a monomial order. The generic initial ideal of $I$ with respect to $\tau$ is denoted by $\gin_\tau(I).$ When $\mathbb{K}$ is infinite,  there exists a non-empty Zariski open set $U\subseteq \GL_{n}(\mathbb{K})\subseteq \mathbb{K}^{n^2}$ of coordinates changes such that $\gin_{\tau}(I)=\ini_{\tau}(gI)$ for all $g\in U$, \cite{BaSt}. In particular, $I$ and the monomial ideals  $\gin_\tau(I)$ and $\Gin(I)$ share the same Hilbert function. 
It is a consequence of a well-known upper semi-continuity argument that all graded Betti numbers and Hilbert functions of local cohomology modules do not decrease when passing from a homogeneous ideal to its initial ideal, cf. \cite{Pa1}, \cite{Sb}. Therefore, also the Castelnuovo-Mumford regularity does not decrease when taking (generic) initial ideals. 

The characteristic of the base field comes into play because generic initial ideals are Borel-fixed, and these have different combinatorial properties in characteristic zero and in positive characteristic; in the first case Borel-fixed ideals are strongly stable, when $\chara\mathbb{K}=p$ they are $p$-Borel instead \cite{Pa} (cf. also the beginning of Section \ref{local}). 

Strongly stable ideals belong to the class of stable ideals, which are well-understood, see \cite{ArHeHi}, \cite{HeHi}, \cite{HeHi2}, \cite{Se}. In fact,
a minimal graded free resolution of such ideals, called the Eliahou-Kervaire resolution, can be described easily and it is independent of the characteristic. Hence, all of the graded Betti numbers and the related invariants are also characteristic independent, which is known to be false in general for monomial ideals. In particular, the regularity of such ideals coincides with their generating degree and, by what we said above, does not depend on the characteristic. Finally, it is worth mentioning that rings which are quotients of $\mathbb{K}[X_1,\dots,X_n]$ by strongly stable ideals are sequentially Cohen-Macaulay \cite{HeSb}.
Thus, when $\chara\mathbb{K}=0$, generic initial ideals are endowed with the properties of (strongly) stable ideals; in addition, if we consider the reverse-lexicographic order, $\Gin(I)$ satisfies another fundamental property,  the so-called Crystallization Principle (cf. \cite{Gr} and the relative subsection of Section \ref{CRY}).  This principle provides a strong constrain on the degrees of a minimal set of generators of $\Gin(I)$ and allows, thus, to determine generic initial ideals in some concrete examples.

When  $\chara\mathbb{K}=p>0$, some of the properties of $\gin_\tau(I)$ do not hold true, the combinatorics underlying its structure of $p$-Borel ideal becomes more intricate, see for instance \cite{Pa}, \cite{ArHe}, \cite{EnPfPo}, \cite{HePo}, and Green's Crystallization Principle fails, see Example \ref{nocrys}. 
Therefore, the common strategy of passing to the generic initial ideal of $I$  does not work in positive characteristic that well.

Motivated by all of the above, we want to provide a tool endowed with the same  properties as a generic initial ideal computed over a field of characteristic $0$,  which might help to overcome some of the extra difficulties one can encounter in  positive characteristic. \vspace{.2cm}

Let $\mathbb{K}$ be any field and denote by $A$ the standard graded polynomial ring $\mathbb{K}[X_1,\ldots,X_n]$. Throughout the paper any assumption on $\chara\mathbb{K}$ will be specified; without loss of generality we may assume that $\mathbb{K}$ is infinite, if needed. 
When we want to stress the dependence on the coefficients field $\mathbb{K}$, we shall write $A_\mathbb{K}$ instead of $A$; accordingly, $A_{\mathbb Q}$ will denote $\mathbb Q[X_1,\dots,X_n].$ By $\mathcal{M_A}$  we shall denote the set of all (monic) monomials of $A.$ A monomial order will be denoted by $\tau$.
In order to define the zero-generic initial ideal of a given homogeneous ideal, we work with different base fields, i.e., with $\QQ$ and any field $\mathbb{K}$, and with the corresponding polynomial rings $A_\QQ$ and $A_\mathbb{K}$. Whenever $I$ is a monomial ideal of $A_\mathbb{K}$, we can assume that $I$ is generated by monic monomials, and let  $I_\mathbb{K}:=I$ and 
$I_{\mathbb{K}'}$ be the ideal generated by the image of these monomials in the ring  $A_{\mathbb{K}'}$, where $\mathbb{K}'$ is any other field.\\

Our construction of zero-generic initial ideals is elementary and it is explained in the following definition.

\begin{definition*} Let $I=I_\mathbb{K}$ be a homogeneous ideal of $A_\mathbb{K}$, and let $\tau$ be a monomial order. We define  the \emph{zero-generic initial ideal} of $I$ with respect to $\tau$ to be the ideal of $A_\mathbb{K}$  
 $$\gin_0(I):=(\gin_{\tau}((\gin_{\tau}(I))_{\QQ}))_{\mathbb{K}}.$$
We denote by $\Gin_0(I)$ the zero-generic initial ideal of $I$ with respect to the reverse lexicographic order.
\end{definition*} 

\begin{remark*}\emph{ Let $\tau$ be a monomial order on $\mathcal{M}_{A_\mathbb{K}}$; we recall the classical definition of generic initial ideal with respect to $\tau$ and some related basic facts. First, consider a matrix  of indeterminates ${\bf y}=(y_{ij})_{1\leq i,j \leq n}$ and the extension field $\mathbb{K}({\bf y})$ of $\mathbb{K}$. Let $\gamma$ be the $\mathbb{K}$-algebra homomorphism 
$\gamma: \; A_\mathbb{K} \longrightarrow A_{\mathbb{K}({\bf y})}$ defined by the assignment $X_i\mapsto \sum_{j=1}^n y_{ij} X_j$ for all $i=1,\dots,n$ and extended by linearity. Given a homogeneous ideal $I$ of $A_\mathbb{K}$, we can compute the ideal $\gamma I\subseteq A_{\mathbb{K}({\bf y})}$ and its initial ideal with respect to $\tau$, obtaining a monomial ideal $J$ of $A_{\mathbb{K}({\bf y})}$. In this way, one defines the usual generic initial $\gin_{\tau}(I)$ of $I$ to be $J_{\mathbb{K}}$.  Observe that $\mathbb{K}$ is not required to be infinite. Henceforth, when $\tau$ is the reverse-lexicographic order, $\gin_{\tau}(I)$ will be denoted  by $\Gin(I).$
}\end{remark*}

The reader accustomed to working with generic initial ideal immediately sees that $\gin_0(I)$ is invariant with respect to coordinates changes, it is Borel-fixed, it is strongly stable independently of the characteristic, it preserves the Hilbert function and if the characteristic is $0$ it coincides with $\gin(I)$. 

Encouraged by these observations, we investigate
the subject deeper by means of weakly stable ideals, a class of monomial ideals which include strongly stable, stable and $p$-Borel ideals, and, by doing so, we generalize or prove for zero-generic initial ideals some of the most significant results known on generic initial ideals. For instance, by proving in Theorem \ref{PardueCo} that the Hilbert function of local cohomology modules of weakly stable ideals are independent of the base field, we can prove  that the Hilbert functions of the local cohomology modules of $A/I$ are bounded above by those of  $A/\gin_0(I)$. As a consequence, we obtain that the projective dimension and Castelnuovo-Mumford regularity of $I$ are also bounded above by those of $\gin_0(I)$. We also show that  the Hilbert functions of the local cohomology modules of $A/\Gin(I)$ and of  $A/\Gin_0(I)$ are the same and that $I$ and $\Gin_0(I)$ have same extremal Betti numbers, as in the main result of \cite{BaChPo}, and therefore same Castelnuovo-Mumford regularity. These properties, and more, are proved in Proposition \ref{PRO}. 
In Theorem \ref{crys} we prove that Crystallization Principle holds for zero-generic initial ideals. In Proposition \ref{SCM} and Theorem \ref{composaggio} respectively, we recover criteria for an ideal to have a sequentially Cohen-Macaulay quotient ring and to be component-wise linear, providing characteristic independent analogues for zero-generic initial ideals of the main results of \cite{HeSb} and \cite{ArHeHi}. 

We also prove a lower bound for the Castelnuovo-Mumford regularity of general hyperplane sections in terms of restrictions of $\Gin_0(-)$ in Theorem \ref{restr}. In the last section we show two applications of the results we obtained: a generalization of the main results of \cite{CiLeMaRo} and a simpler proof than that of \cite{CaSb} of a well-known doubly exponential upper bound for the Castelnuovo-Mumford regularity of a homogeneous ideal in terms of its generating degree.

\section{Local cohomology and weakly stable ideals}\label{local}
In this section we  develop some technical results on Hilbert functions of local cohomology modules  with focus on a special class of monomial ideals called \emph{weakly stable}. These results are needed for the central section of this article, because they provide methods to estimate and compute several invariants of zero-generic initial ideals,  as for instance their projective dimension, Castelnuovo-Mumford regularity and extremal Betti numbers.  We shall also  discuss how extremal Betti numbers  can be computed using Local Cohomology and explain how the Hilbert functions of local cohomology modules of quotient rings defined by weakly stable monomial ideals are not affected by a change of the base field, see Theorem \ref{PardueCo}. This result provides positive answer to the analogue for Local Cohomology of a conjecture of Pardue on Betti numbers or, equivalently, on Tor modules, cf. \cite{Pa}.  Meanwhile, the original conjecture on Betti numbers has been disproved by Kummini and the first author, see \cite{CaKu}.\\

Let $n$ be a fixed integer, ${\mathbb{K}}$ an arbitrary field and $\GL_n({\mathbb{K}})$ the general linear group. We recall that the {\em Borel subgroup} of $\GL_n({\mathbb{K}})$ is the group consisting of all upper triangular matrices of $\GL_n({\mathbb{K}})$. In the following $A=A_{\mathbb{K}}$ will be the standard graded ${\mathbb{K}}$-algebra ${\mathbb{K}}[X_1,\dots,X_n]$.  An ideal of $A_{\mathbb{K}}$ is called {\em Borel-fixed} if it is fixed under the action of the Borel subgroup and, in this case, it is a monomial ideal. We also recall the definition of some other classes of monomial ideals, and to do so we need some more notation. For any $u\in\mathcal{M}$, we let $m(u)=\max\{i \: X_i| u\}$. If  $p$ is a prime
number and $k$ a non-negative integer, {\em the $p$-adic expansion of $k$} is the expression $k=\sum_i k_i p^i$, with $0\leq k_i \leq p-1$. If $\sum_i k_i p^i$ and $\sum_i l_i p^i$ are
the $p$-adic expansions of $k$ and $l$ respectively, one sets $k\leq_p l$ if
and only if $k_i\leq l_i$ for all $i$. A {\em strongly stable} (sometimes also called {\em standard Borel-fixed}) ideal $I$ is an ideal endowed with the exchange property: For each monomial $u$ of $I$, if $X_i | u$ then $X_ju/X_i\in I$  for each $j<i$. A {\em stable ideal} is defined by the property: For each monomial $u\in I$, $X_ju/X_{m(u)}\in I$ for each $j< m(u)$.  A $p$-Borel ideal is defined by the property: For each monomial $u\in I$, if $l$ is the largest integer such that $X_i^l | u$, then $X_j^ku/X_i^k\in I$, for all $j<i$ and $k \leq_p l$. 
Strongly stable ideals are Borel-fixed and if $\chara\mathbb{K} = 0$ the vice versa holds; furthermore, if $\chara {\mathbb{K}} = p$ a monomial ideal is Borel-fixed if and only if it is $p$-Borel.

We are going to recall next the definition of filter-regular sequence. A linear form $l$ of $A$ is said to be {\em filter-regular} for a graded $A$-module $M=\bigoplus_{d \in \mathbb Z} M_d$ if the multiplication map  $M_d \stackrel{\cdot l}{\rightarrow} M_{d+1}$ is  injective for all $d$ sufficiently large. A sequence of linear forms $l_1,\dots,l_r$ is called a {\em filter-regular sequence} for a graded module $M$ if $l_j$ is filter-regular for $M/(l_1,\dots,l_{j-1})M$, for all $j=1,\dots,r$. In contrast with the case of homogeneous regular sequences, the permutation of a filter-regular sequence may not be filter-regular.

In the following $\mm_A$ will denote the graded maximal ideal of $A$.

\begin{definition} A  monomial ideal $I$ of $A=A_{\mathbb{K}}$ is called {\em weakly stable} if  $X_n,\dots,X_1$ form a filter-regular sequence for $A/I$. 
\end{definition}

\noindent
Equivalently, a monomial ideal $I$ of $A$ is weakly stable if, for all monomials $u\in I$ and for all $j<m(u)$, there exists a positive integer $k$ such that $X_j^ku/X_{m(u)}^l\in I$, where $l$ is the largest integer such that $X_{m(u)}^l$ divides $u$. Furthermore, it is then easily seen that strongly stable, stable and $p$-Borel ideals are weakly stable ideals; in particular generic initial ideals are always weakly stable.

\begin{remark}\label{lasappiamolunga} {\rm\bf (a)} The definition of weakly stable ideals can be found for instance in \cite{CaSb}. This class has also been introduced by means of equivalent definitions by other authors, see that of {\em ideals of nested type} in \cite{BeGi} or  {\em quasi-stable} ideals in \cite{Se}. The name we use comes from the above exchange condition, which is weaker than  those which define stable and strongly stable ideals.\\ 
{\rm\bf (b)} Another useful characterization of weakly stable ideals is that all of their associated primes  are segments, i.e. of the form $(X_1,X_2,\ldots,X_i)$ for some $i$. We notice that $\mm_A$-primary ideals are weakly stable.\\
{\rm\bf (c)} The saturation $I:X_n^{\infty}$ of a weakly stable ideal $I$ with respect to the last variable equals the saturation $I:\mm_A^{\infty}$ of $I$ with respect to $\mm_A$ and the resulting ideal is again weakly stable.\\
{\rm\bf (d)} Let $A_{[j]}:={\mathbb{K}}[X_1,\dots,X_j]$ and $I_{[j]}$ denote the ideal $I \cap A_{[j]}$ (so that
$A_{[n]}=A$ and $I_{[n]}=I$). It descends immediately from the definition that, when $I$ is weakly stable $I_{[j]}$ is weakly stable for all $j=1,\dots,n$.
\end{remark}

Let $M$ be a finitely generated graded $A$-module, let  $\beta^A_{ij}(M):=\dim_{\mathbb{K}}\Tor_i(M,{\mathbb{K}})_j$ be the graded Betti numbers of M and $H^i_{\mm_A}(M)$ the $i^{\rm th}$ (graded)  local cohomology module of $M$ with support in the graded maximal ideal $\mm_A$ of $A$. 

In his Ph.D. Thesis \cite{Pa},  Pardue conjectured that the graded Betti numbers of $p$-Borel ideals would be independent of the characteristic of the ground field $\mathbb{K}$ or, in other words, that, for every $p$-stable ideal $I$ of $A_{\mathbb{K}}$ one would have $\beta_{ij}(I)=\beta_{ij}(I_{\mathbb Q})$ for all $i,j$. At the time, there was some evidence supporting this conjecture.  First, it is not hard to see that 
for every monomial ideal $I$ one has $\beta_{ij}(I)=\beta_{ij}(I_{\mathbb Q})$ when  $i=0,1.$
Furthermore, Pardue was able to show that important invariants that can be computed in terms of graded Betti numbers - such as the Castelnuovo-Mumford regularity and  projective dimension - of $p$-Borel ideals are characteristic independent, fact that is false in the general  case of monomial ideals.
Recently, this conjecture has been disproved in \cite{CaKu}. 

The statement of Pardue's Conjecture regards Hilbert functions of torsion modules of $p$-Borel ideals, and it makes sense to ask whether an analogous statement holds, provided that we substitute $\Tor^A_i(-,{\mathbb{K}})$ with $H^i_{\mm_A}(A/(-))$. We prove that this is indeed the case, even under the milder assumption that $I$ is  weakly stable ideal. 

\begin{theorem}[Pardue's Conjecture for local cohomology]\label{PardueCo} 
Let $I\subseteq A$ be a weakly stable ideal. Then, for all $i$,  $\Hilb\left(H^i_{\mm_A}(A/I)\right)= \Hilb\left(H^i_{\mm_{A_{\mathbb Q}}}(A_{\mathbb Q}/I_{\mathbb Q})\right),$ 
\end{theorem}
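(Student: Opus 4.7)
The plan is to proceed by induction on the number of variables $n$. The base case $n=1$ is immediate: the weakly stable ideals of $\mathbb{K}[X_1]$ are $(0)$, $(X_1^d)$, and $\mathbb{K}[X_1]$, and the resulting local cohomology modules visibly do not depend on $\mathbb{K}$.

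For the inductive step, I would first reduce to the case in which $I$ is saturated. By Remark~\ref{lasappiamolunga}(c), $I\sat = I:\mm_A^\infty = I:X_n^\infty$ is determined combinatorially by the monomial generators of $I$, so in particular $(I_{\QQ})\sat = (I\sat)_{\QQ}$. From the short exact sequence
$$0 \to I\sat/I \to A/I \to A/I\sat \to 0$$
and the finite length of $I\sat/I$, one obtains $H^0_{\mm_A}(A/I) = I\sat/I$, whose Hilbert function is read off directly from the monomials and is therefore field-independent, together with isomorphisms $H^i_{\mm_A}(A/I) \cong H^i_{\mm_A}(A/I\sat)$ for every $i \geq 1$. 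It thus suffices to prove the theorem for $A/I\sat$.

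The structural key is the observation that no minimal monomial generator of $I\sat$ involves $X_n$: if $u = X_n v$ were minimal in $I\sat$ then $u X_n^k \in I$ for some $k$ would force $v \in I:X_n^{k+1} \subseteq I\sat$, contradicting minimality. Setting $J := I\sat \cap A_{[n-1]}$, which is weakly stable in $A_{[n-1]}$ by Remark~\ref{lasappiamolunga}(d) applied to $I\sat$, this yields $I\sat = J A$ and hence a decomposition of $A$-modules
$$A/I\sat \;\cong\; (A_{[n-1]}/J) \otimes_{\mathbb{K}} \mathbb{K}[X_n],$$
where $A$ acts through the identification $A = A_{[n-1]} \otimes_{\mathbb{K}} \mathbb{K}[X_n]$. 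Since $\mm_A = \mm_{A_{[n-1]}} A + (X_n)$, the \v{C}ech complex computing $H^\bullet_{\mm_A}(A/I\sat)$ factors as the tensor product of the \v{C}ech complex for $\mm_{A_{[n-1]}}$ acting on $A_{[n-1]}/J$ with the \v{C}ech complex for $(X_n)$ acting on $\mathbb{K}[X_n]$. The K\"unneth formula over the field $\mathbb{K}$, combined with the vanishing of $H^q_{(X_n)}(\mathbb{K}[X_n])$ for $q \neq 1$ and the manifest field-independence of $H^1_{(X_n)}(\mathbb{K}[X_n]) \cong X_n^{-1}\mathbb{K}[X_n^{-1}]$, yields
$$H^i_{\mm_A}(A/I\sat) \;\cong\; H^{i-1}_{\mm_{A_{[n-1]}}}(A_{[n-1]}/J) \otimes_{\mathbb{K}} H^1_{(X_n)}(\mathbb{K}[X_n]).$$

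The inductive hypothesis applied to the weakly stable ideal $J \subseteq A_{[n-1]}$ gives the field-independence of the Hilbert function of $H^{i-1}_{\mm_{A_{[n-1]}}}(A_{[n-1]}/J)$; since Hilbert series multiply under tensor products of graded $\mathbb{K}$-vector spaces, the same holds for $H^i_{\mm_A}(A/I\sat)$, and hence, via the reduction step, for $H^i_{\mm_A}(A/I)$. The main obstacle is really the key observation that $I\sat$ has no $X_n$ in its minimal generators; this genuinely uses the weakly stable hypothesis (general monomial ideals admit no such splitting), which is consistent with the well-known characteristic-dependence of local cohomology of arbitrary monomial quotients such as Stanley--Reisner rings.
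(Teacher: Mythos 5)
Your proof is correct and follows essentially the same route as the paper's: both reduce to the saturated case (where only $H^0$ is affected and its Hilbert function is manifestly combinatorial), use that $I:\mm_A^{\infty}=I:X_n^{\infty}$ for weakly stable ideals to see that the saturation is extended from $A_{[n-1]}$, and then peel off the last variable at the cost of shifting the cohomological index by one. Your K\"unneth computation is precisely the paper's formula \eqref{AdjZ}, quoted there from \cite{Sb2}, and your induction on $n$ is equivalent to the paper's induction on the cohomological index carried out in Lemma \ref{scendiazero}.
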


This result explains, we believe, some of the evidence that motivated Pardue's Conjecture in the first place, since the Castelnuovo-Mumford regularity, projective dimension, and extremal Betti numbers of $A/I$ are completely determined by the Hilbert functions of the local cohomology modules  of $A/I$, see Subsection \ref{extremall}.

Before proving Theorem \ref{PardueCo} we need first some technical results on weakly stable ideals and their local cohomology modules.

\begin{lemma}\label{modulosaturo} Let $I$ be a weakly stable ideal of $A_{[n]}=A$, with $n>1$. Then, 
$$I_{[n-1]}:X_{n-1}^\infty= (I:X_{n}^\infty)_{[n-1]}:X_{n-1}^{\infty}.$$
\end{lemma}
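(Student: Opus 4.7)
The plan is to show equality by unwinding both sides into explicit conditions on monomials. Since $I$ is weakly stable, hence monomial, both ideals appearing in the statement are monomial ideals of $A_{[n-1]}$, so it suffices to determine their respective sets of monomials. For a monomial $u\in A_{[n-1]}$, the containment $u\in I_{[n-1]}:X_{n-1}^\infty$ is equivalent to the existence of an integer $k\geq 0$ with $uX_{n-1}^k\in I$ (the condition $uX_{n-1}^k\in A_{[n-1]}$ being automatic); whereas $u\in (I:X_n^\infty)_{[n-1]}:X_{n-1}^\infty$ is equivalent to the existence of integers $k,l\geq 0$ with $uX_{n-1}^k X_n^l\in I$. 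The task is thus to prove that these two conditions on $u$ coincide.

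The inclusion $I_{[n-1]}:X_{n-1}^\infty\subseteq (I:X_n^\infty)_{[n-1]}:X_{n-1}^\infty$ is immediate and essentially formal: since $I_{[n-1]}\subseteq I\subseteq I:X_n^\infty$ and $I_{[n-1]}\subseteq A_{[n-1]}$, one has $I_{[n-1]}\subseteq (I:X_n^\infty)_{[n-1]}$, and $X_{n-1}$-saturation preserves this inclusion; equivalently, if $uX_{n-1}^k\in I$ then we can take $l=0$.

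The reverse inclusion is the content of the lemma and is where the weakly stable hypothesis is used. Given a monomial $u\in A_{[n-1]}$ and integers $k,l\geq 0$ with $v:=uX_{n-1}^k X_n^l\in I$, I would like to eliminate the power of $X_n$. If $l=0$ there is nothing to do. Otherwise $l\geq 1$, so $m(v)=n$, and since $u$ does not involve $X_n$, the exponent $l$ is exactly the largest power of $X_n$ dividing $v$. Applying the characterization of weakly stable ideals recalled just before Remark~\ref{lasappiamolunga} to $v$ with $j=n-1<n=m(v)$, one obtains a positive integer $k_1$ with $X_{n-1}^{k_1}\,v/X_n^l\in I$; but this monomial equals $uX_{n-1}^{k+k_1}$ and lies in $A_{[n-1]}$, so it belongs to $I_{[n-1]}$. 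Hence $u\in I_{[n-1]}:X_{n-1}^\infty$, which completes the proof.

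The argument is short and contains no real obstacle once one decodes the saturations monomial by monomial; the only subtle point to be careful about is that in the key step the exponent of $X_n$ in $v$ is genuinely $l$ (not larger), which is guaranteed by $u\in A_{[n-1]}$, so that the weakly stable exchange applies with the correct denominator $X_n^l$.
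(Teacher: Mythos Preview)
Your proof is correct and proceeds by a genuinely different route from the paper. You work monomial by monomial and invoke the explicit exchange characterization of weak stability (for $v\in I$ with $m(v)=n$, some power of $X_{n-1}$ times $v/X_n^l$ lies in $I$) to remove the factor $X_n^l$ directly. The paper instead argues at the level of ideals: since $I_{[n-1]}$ and $(I:X_n^\infty)_{[n-1]}$ are both weakly stable in $A_{[n-1]}$, their $X_{n-1}$-saturations coincide with their $\mm_{A_{[n-1]}}$-saturations (Remark~\ref{lasappiamolunga}(c)), so it suffices to check the two ideals agree in all large degrees; this follows because $I$ and $I:X_n^\infty=I:\mm_A^\infty$ agree in large degrees, hence so do their restrictions. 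Your approach is more elementary and constructive, making no appeal to associated primes or to the equality $I:X_n^\infty=I:\mm_A^\infty$; the paper's is shorter once those structural facts are on the table and highlights that the equality of saturations is really an asymptotic statement.
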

\begin{proof} First of all we notice that  $I_{[n-1]}$ and $(I:X_{n}^\infty)_{[n-1]}$ are both weakly stable; next, we recall that to prove the desired equality is equivalent to show that these two ideals agree in every sufficiently large degree $d$.  This is easily seen, since $I$ and $I:X_n^{\infty}=I:{\mm_A}^{\infty}$ agree in degree $d\gg 0$ and, therefore, their restrictions to $A_{[n-1]}$ agree as well for $d$ sufficiently large. 
\end{proof}

We recall next the following formula proved in \cite{Sb2} (see also \cite{CaSb1} equations (3.8) and (3.9)).  Let $I$ be a homogeneous ideal of $A$, $A[Z]$ be a polynomial ring over $A$ and $J$ the ideal $IA[Z]$. Then, for every  $i\geq 0$, we have 
\begin{equation}\label{AdjZ}
\Hilb\left(H^{i+1}_{\mm_{A[Z]}}(A[Z]/J)\right)=\Hilb\left(H^i_{\mm_A}(A/I)\right) \cdot \sum_{j<0} t^{j}.
\end{equation}

\noindent
The following result, which is useful for our computations, is yielded by  \eqref{AdjZ}. 

\begin{lemma}\label{scendiazero} 
 Let $I\subseteq A$ be a given weakly stable ideal. For $0<i\leq n,$
 let $J:=(I_{[n-i+1]}:X_{n-i+1}^{\infty})_{[n-i]}.$ Then, the following formula for the Hilbert function of the $i$-th local cohomology module of $A/I$ holds: 
\begin{equation}\label{azero}
\Hilb\left(H^i_{\mm_A}(A/I)\right)= \Hilb\left(H^0_{\mm_{A_{[n-i]}}}(A_{[n-i]}/J)\right) \cdot (\sum_{j<0} t^{j})^{i}.
\end{equation}
Moreover, for every $0<h\leq i\leq n$ one has
\begin{equation}\label{auno}
\Hilb\left(H^i_{\mm_A}(A/I)\right)= \Hilb\left(H^h_{\mm_{A_{[n-i+h]}}}(A_{[n-i+h]}/I_{[n-i+h]})\right) \cdot (\sum_{j<0} t^{j})^{i-h}.
\end{equation}
\end{lemma}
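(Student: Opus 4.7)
My plan is to prove two one-step descent identities and then iterate them. Namely, for $n\geq 2$ and $I\subseteq A=A_{[n]}$ weakly stable, I aim to establish
\[
\Hilb\bigl(H^i_{\mm_A}(A/I)\bigr)=\Hilb\bigl(H^{i-1}_{\mm_{A_{[n-1]}}}(A_{[n-1]}/I_{[n-1]})\bigr)\cdot\sum_{j<0}t^j \qquad (i\geq 2),
\]
\[
\Hilb\bigl(H^1_{\mm_A}(A/I)\bigr)=\Hilb\bigl(H^0_{\mm_{A_{[n-1]}}}(A_{[n-1]}/(I:X_n^\infty)_{[n-1]})\bigr)\cdot\sum_{j<0}t^j.
\]
Since $I_{[n-1]}$ remains weakly stable by Remark~\ref{lasappiamolunga}(d), iterating the first identity $i-h$ times (with $1\leq h\leq i$) will yield \eqref{auno}; terminating the iteration at $H^1$ and then applying the second identity once will produce \eqref{azero}, with $J=(I_{[n-i+1]}:X_{n-i+1}^\infty)_{[n-i]}$.

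The key observation feeding both identities is the structure of $I':=I:X_n^\infty$, which by Remark~\ref{lasappiamolunga}(c) coincides with $I:\mm_A^\infty$, so that $X_n$ is a non-zero-divisor on $A/I'$. Consequently, no minimal monomial generator of $I'$ can involve $X_n$: if $X_n^\ell u$ with $\ell\geq 1$ and $u\in A_{[n-1]}$ were such a generator, then $u\in I':X_n^\ell=I'$, contradicting minimality. Hence $I'=(I')_{[n-1]}\cdot A$, so $A/I'\cong\bigl(A_{[n-1]}/(I')_{[n-1]}\bigr)[X_n]$. Applying formula~\eqref{AdjZ} to $(I')_{[n-1]}\subseteq A_{[n-1]}$ with the adjoined variable $X_n$ will then give, for every $i\geq 1$,
\[
\Hilb\bigl(H^i_{\mm_A}(A/I')\bigr)=\Hilb\bigl(H^{i-1}_{\mm_{A_{[n-1]}}}(A_{[n-1]}/(I')_{[n-1]})\bigr)\cdot\sum_{j<0}t^j.
\]
Since $I'/I$ has finite length, $H^i_{\mm_A}(A/I)=H^i_{\mm_A}(A/I')$ for $i\geq 1$, transferring the identity to the desired left-hand side.

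The final step is to replace $(I')_{[n-1]}$ by $I_{[n-1]}$ on the right-hand side whenever permissible. Lemma~\ref{modulosaturo} (together with Remark~\ref{lasappiamolunga}(c) applied to the weakly stable ideals $I_{[n-1]}$ and $(I')_{[n-1]}$) ensures that these two ideals share the same $\mm_{A_{[n-1]}}$-saturation, hence differ by a finite-length module, and so $H^{i-1}_{\mm_{A_{[n-1]}}}$ of the two quotient rings agree as long as $i-1\geq 1$. This handles the $i\geq 2$ identity. For $i=1$ no such substitution is allowed and $(I:X_n^\infty)_{[n-1]}$ must be retained; after iteration this is precisely the ideal $J$ appearing in~\eqref{azero}.

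The main technical obstacle is the boundary $i=1$: the saturation persists in the final descent step, so the iteration must be orchestrated so that the unique terminal application switches from descending via $I_{[\cdot]}$ to descending via the $X_n$-saturation. Lemma~\ref{modulosaturo} is precisely what makes these two modes of descent compatible and produces the ideal $J$ of \eqref{azero} as stated.
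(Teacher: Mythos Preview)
Your proof is correct and follows essentially the same route as the paper: both arguments rest on the observation that $I:\mm_A^\infty=(I:X_n^\infty)_{[n-1]}\,A$, then apply formula~\eqref{AdjZ} to descend one variable, and invoke Lemma~\ref{modulosaturo} to control the saturation at each step. The only organizational difference is that you isolate two one-step identities (one for $i\geq 2$ with $I_{[n-1]}$ on the right, one for $i=1$ retaining the saturation) and iterate them, whereas the paper iterates the single identity \eqref{unpasso} with the saturated ideal throughout and performs the replacement by $I_{[n-i+h]}$ only at the end; the content is the same.
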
 
\begin{proof}
Since $i>0,$ one has that $H^i_{\mm_A}(A/I)\simeq H^i_{\mm_A}(A/(I:\mm_A^{\infty}))$. Also,  
$I:\mm_A^{\infty}=I:X_n^{\infty}=(I:X_n^{\infty})_{[n-1]}A$ and, thus, \eqref{AdjZ} implies  
\begin{equation}\label{unpasso}
\Hilb(H^i_{\mm_A}(A/I))= \Hilb(H^{i-1}_{\mm_{A_{[n-1]}}}(A_{[n-1]}/(I:X_n^{\infty})_{[n-1]})) 
\cdot (\sum_{j<0} t^{j}),
\end{equation}
which is formula \eqref{azero} when $i=1.$ 
The other cases of \eqref{azero} follow by inducting on the cohomological index, considering  the ideal $(I:X_n^{\infty})_{[n-1]}$ and using Lemma \ref{modulosaturo}. 

When $i=h$, \eqref{auno} is trivial. By using \eqref{unpasso} and the same inductive argument as  before, we see that, for $0<h<i$ 
\[\Hilb(H^i_{\mm_A}(A/I))= \Hilb(H^{h}_{\mm_{A_{[n-i+h]}}}(A_{[n-i+h]}/   (I_{[n-i+h+1]}:X_{n-i+h+1}^{\infty})_{[n-i+h]}))
\cdot (\sum_{j<0} t^{j})^{i-h}.
\] 
We finally observe that $I_{[n-i+h]}$ and $(I_{[n-i+h+1]}:X_{n-i+h+1}^{\infty})_{[n-i+h]}$ have the same saturation by a repeated use of Lemma \eqref{modulosaturo}. This completes the proof since $h>0.$
\end{proof} 

\begin{proof}[Proof of Theorem \ref{PardueCo}]
If $i=0$ we have that $H^0_{\mm_A}(A/I))=(I:\mm_A^{\infty})/I$ which, by hypothesis, is just $(I:X_n^{\infty})/I$; clearly, its Hilbert function is thus independent of the base field.
When $i>0$, by Lemma \ref{scendiazero} \eqref{azero}  it is enough to compute the Hilbert function of the $0^{\rm th}$ local cohomology module of an algebra  defined by a weakly stable ideal; thus, the conclusion follows from the previous case.
\end{proof}

The definition of sequentially Cohen-Macaulay module generalizes in a sense that of Cohen-Macaulay module, cf. \cite{HeSb}. 
A necessary condition for a finitely generated graded $A$-module $M$ over a graded Gorenstein ring $A$ to be  sequentially Cohen-Macaulay is that all of its Ext-modules $\Ext^i_A(M,A)$, are either $0$ or they are Cohen-Macaulay. By \cite{HeSb}, we know that strongly stable and $p$-stable ideals are sequentially Cohen-Macaulay. We conclude this part by generalizing the statement to weakly stable ideals.

\begin{proposition}\label{seq} Let $I\subset A$ be a weakly stable ideal. Then $A/I$ is sequentially CM.
\end{proposition}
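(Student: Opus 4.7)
The plan is to induct on the number of variables $n$. For $n=1$, every ideal of $\mathbb{K}[X_1]$ yields a Cohen-Macaulay quotient (either $A$ itself or an Artinian ring), so the conclusion is immediate.

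For the inductive step, I would set $J := I : X_n^\infty$, which by Remark \ref{lasappiamolunga}(c) coincides with $I : \mm_A^\infty$ and is again weakly stable. The short exact sequence
\[
0 \longrightarrow J/I \longrightarrow A/I \longrightarrow A/J \longrightarrow 0
\]
exhibits $H^0_{\mm_A}(A/I) = J/I$ as a finite length submodule of $A/I$ (hence Cohen-Macaulay of dimension $0$), with quotient $A/J$ on which $X_n$ is a non-zero-divisor.

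Now, since $J : X_n = J$, no minimal monomial generator of $J$ can be divisible by $X_n$; hence $J = J_{[n-1]} \cdot A$, and therefore $A/J \cong (A_{[n-1]}/J_{[n-1]})[X_n]$. By Remark \ref{lasappiamolunga}(d), $J_{[n-1]}$ is weakly stable in $A_{[n-1]}$, and the inductive hypothesis yields that $A_{[n-1]}/J_{[n-1]}$ is sequentially Cohen-Macaulay. Since adjoining a polynomial variable preserves the sequentially Cohen-Macaulay property (tensor the filtration up, noting that each quotient $(M_i/M_{i-1})[X_n]$ remains Cohen-Macaulay of dimension one higher), $A/J$ is sequentially Cohen-Macaulay as well.

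To conclude, I would lift the sequentially Cohen-Macaulay filtration $0 = N_0 \subsetneq N_1 \subsetneq \cdots \subsetneq N_r = A/J$ to $A/I$ by taking preimages $\widetilde{N}_i$ under the projection $A/I \twoheadrightarrow A/J$, and then prepend $J/I$ at the bottom, obtaining the chain $0 \subsetneq J/I \subsetneq \widetilde{N}_1 \subsetneq \cdots \subsetneq \widetilde{N}_r = A/I$, whose successive quotients are $J/I$ followed by the $N_i/N_{i-1}$, all Cohen-Macaulay. The main point to verify---really the only piece of bookkeeping---is that the dimensions along this new filtration are strictly increasing; this follows because $X_n$ is a non-zero-divisor on $A/J$, so every associated prime of $A/J$ is of the form $(X_1,\dots,X_i)$ with $i<n$ by Remark \ref{lasappiamolunga}(b), whence $\dim N_1/N_0 \geq 1 > 0 = \dim J/I$. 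This shows that $A/I$ is sequentially Cohen-Macaulay.
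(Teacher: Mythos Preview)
Your proof is correct and, once the induction is unwound, produces precisely the filtration the paper writes down directly: the paper sets $N_0=I$ and $N_{i+1}=N_i:X_{n-i}^\infty$, observes that each $N_m$ is extended from $A_{[n-m]}$, and takes the chain $0=N_0/I\subseteq\cdots\subseteq N_n/I=A/I$ (removing redundant terms) as the sequentially Cohen--Macaulay filtration. Your inductive presentation supplies the verifications (that the successive quotients are Cohen--Macaulay of strictly increasing dimension) which the paper leaves to the reader, but the underlying construction is the same.
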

\begin{proof}
We may assume that $I\not =0$, we let $N_0=I$, $N_{i+1}=N_i:X_{n-i}^\infty$, for $i=0,\ldots,n-1$ and observe that, for all $m$, the ideal $N_{m}$ is the extension to $A$ of a non-zero ideal of $A_{[n-m]}$. By considering the sequence $0=N_0/I \subseteq N_1/I \subseteq \cdots \subseteq N_m/I$ and by removing redundant terms if they occour, we obtain a filtration that makes $A/I=N_n/I$ sequentially Cohen-Macaulay.
\end{proof}
\subsection{Extremal Betti numbers and Corners}\label{extremall} 
We recall here the definition of extremal Betti numbers  and corners of the Betti diagram. Following \cite{BaChPo}, we call a non-zero Betti number $\beta^A_{ij}(M)$ such that $\beta^A_{rs}(M)=0$ whenever $r\geq i$, $s\geq j +1$ and $s-r \geq j-i$ an {\em extremal Betti number of $M$}; moreover, we call a pair of indexes $(i,j-i)$ such that $\beta^A_{ij}(M)$ is extremal  a {\em corner of $M$}, the name being suggested by the output of the command BettiDiagram in the Computer Algebra System Macaulay2. One can see that   the extremal Betti numbers of $A/I$ can be computed directly from the local cohomology modules of $A/I$, since, by \cite{Tr} or again by \cite{BaChPo}, for any finitely generated graded $A$-module $M$
\begin{equation}\label{extremalandcohom}
\beta_{ij}^A(M)=\Hilb\left(H^{n-i}_{\mm_A}(M)\right)_{j-n}, 
\end{equation}
when $(i,j-i)$ is a corner of $M$.
\indent

We believe that the two following results are well-known to experts; they can be recovered as corollaries of Theorem \ref{PardueCo}. 

\begin{corollary}\label{extremalbase} The extremal Betti numbers of a weakly stable ideal and, thus, of $p$-Borel ideals do not depend on the base field.
\end{corollary}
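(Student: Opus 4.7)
The plan is to combine Theorem \ref{PardueCo} with the characterization of extremal Betti numbers via local cohomology encoded in formula \eqref{extremalandcohom} and its refinement in \cite{BaChPo}. Since $\beta^A_{i-1,j}(I)=\beta^A_{i,j}(A/I)$ for every $i\geq 1$, it suffices to establish the statement for the extremal Betti numbers of the quotient $A/I$. Moreover, since every $p$-Borel ideal is weakly stable (as recalled right after the definition of weakly stable ideals), it is enough to treat the weakly stable case.

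First, I would record the fact that the \emph{position} of a corner of $A/I$ is itself detectable purely from local cohomology: by \cite{BaChPo}, $(i,j-i)$ is a corner of $A/I$ if and only if $\Hilb\left(H^{n-i}_{\mm_A}(A/I)\right)_{j-n}\neq 0$ while $\Hilb\left(H^{n-r}_{\mm_A}(A/I)\right)_{s-n}=0$ for every pair $(r,s)$ with $r\geq i$, $s\geq j+1$ and $s-r\geq j-i$. Consequently, the set of corners is determined entirely by the Hilbert functions of the local cohomology modules $H^k_{\mm_A}(A/I)$. Applying Theorem \ref{PardueCo} to the weakly stable ideal $I$, these Hilbert functions coincide with those of $H^k_{\mm_{A_\QQ}}(A_\QQ/I_\QQ)$ in every cohomological degree $k$, so the set of corners of $A/I$ agrees with the set of corners of $A_\QQ/I_\QQ$.

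Second, at any such corner $(i,j-i)$, formula \eqref{extremalandcohom} identifies
\[
\beta^A_{ij}(A/I)=\Hilb\left(H^{n-i}_{\mm_A}(A/I)\right)_{j-n},
\]
and by Theorem \ref{PardueCo} the right-hand side equals $\Hilb\left(H^{n-i}_{\mm_{A_\QQ}}(A_\QQ/I_\QQ)\right)_{j-n}=\beta^{A_\QQ}_{ij}(A_\QQ/I_\QQ)$. Combining the two steps gives the desired equality of extremal Betti numbers in every characteristic. I do not anticipate any serious obstacle beyond faithfully invoking the local cohomological characterization of corners from \cite{BaChPo}; once that is in hand, the proof is a direct assembly of the ingredients already established.
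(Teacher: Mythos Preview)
Your proposal is correct and is precisely the argument the paper intends: the corollary is stated without proof, merely as a consequence of Theorem~\ref{PardueCo} together with the local-cohomological description of extremal Betti numbers in Subsection~\ref{extremall} (formula~\eqref{extremalandcohom} and \cite{BaChPo}). Your explicit observation that the \emph{positions} of the corners, and not only the values at them, are determined by the Hilbert functions of the local cohomology modules is exactly the ingredient needed, and it is implicit in the paper's reference to \cite{BaChPo}.
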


For an Artinian $A$-module $M$ we let $\End(M)$ be the largest integer $j$ such that $M_j\neq 0$. With this notation, the Castelnuovo-Mumford regularity $\reg M$ of $M$ is defined as $$\reg M=\max_{i,j}\{j-i \: \beta^A_{ij}(M)\neq 0\}$$ or, equivalently, as $$\reg M =\max_i\{\End(H^i_{\mm_A}(M)) + i\}.$$

\begin{corollary}\label{regbase} The Castelnuovo-Mumford regularity and the projective dimension of a weakly stable ideal do not depend on the base field.
\end{corollary}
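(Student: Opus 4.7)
The plan is to derive everything directly from Theorem \ref{PardueCo}, which tells us that each $\Hilb\bigl(H^i_{\mm_A}(A/I)\bigr)$ is the same whether we work over $\mathbb{K}$ or over $\mathbb{Q}$. Both invariants in the statement are encoded by the local cohomology modules of $A/I$, so independence of Hilbert series will immediately force independence of the invariants.

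First I would handle the regularity. Recall from the discussion right before the corollary that
\[
\reg(A/I) \;=\; \max_i\bigl\{\End\bigl(H^i_{\mm_A}(A/I)\bigr) + i\bigr\}.
\]
The number $\End(M)$ of an Artinian graded module is determined by its Hilbert series (it is the largest $j$ with $\Hilb(M)_j\neq 0$). Hence Theorem \ref{PardueCo} gives $\End\bigl(H^i_{\mm_A}(A/I)\bigr) = \End\bigl(H^i_{\mm_{A_\mathbb{Q}}}(A_\mathbb{Q}/I_\mathbb{Q})\bigr)$ for every $i$, and therefore $\reg(A/I) = \reg(A_\mathbb{Q}/I_\mathbb{Q})$. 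Since $\reg(I) = \reg(A/I)+1$ for a proper homogeneous ideal, the regularity of $I$ itself is characteristic-free.

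For the projective dimension I would invoke the Auslander--Buchsbaum formula and the local-cohomology description of depth:
\[
\projdim(A/I) \;=\; n-\depth(A/I), \qquad \depth(A/I) \;=\; \min\bigl\{i \: H^i_{\mm_A}(A/I) \neq 0\bigr\}.
\]
Again by Theorem \ref{PardueCo}, the vanishing (or non-vanishing) of each $H^i_{\mm_A}(A/I)$ is determined by the Hilbert series and is therefore independent of $\chara\mathbb{K}$. Consequently $\depth(A/I)$, and hence $\projdim(A/I) = \projdim(I)+1$, coincide with their counterparts over $\mathbb{Q}$.

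There is essentially no genuine obstacle here: the work has been done in Theorem \ref{PardueCo}, and the corollary is a translation through two standard cohomological formulas. The only points requiring a little care are purely bookkeeping — namely the shifts $\reg(I)=\reg(A/I)+1$ and $\projdim(I)=\projdim(A/I)-1$ — and checking the trivial edge case $I = A$ separately.
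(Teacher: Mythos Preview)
Your proposal is correct and matches the paper's intended argument: the paper does not give a detailed proof but states that both corollaries ``can be recovered as corollaries of Theorem \ref{PardueCo}'' and notes explicitly that regularity, projective dimension, and extremal Betti numbers ``are completely determined by the Hilbert functions of the local cohomology modules of $A/I$.'' Your derivation via $\reg(A/I)=\max_i\{\End(H^i_{\mm_A}(A/I))+i\}$ and the Auslander--Buchsbaum formula together with $\depth(A/I)=\min\{i: H^i_{\mm_A}(A/I)\neq 0\}$ is exactly the translation the authors have in mind.
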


\section{Properties of $\rm{gin_0(-)}$ and $\rm{Gin}_0(-)$.}
This section is entirely dedicated to define and prove a list of properties of the zero-generic initial ideal, and our aim at present is to convince the reader that, in many a way, the zero-generic initial ideal can be used as a characteristic-friendly alternative to the usual generic initial ideal; in the next section we shall provide a concrete example supporting our point of view.

\begin{definition} Let $I=I_{\mathbb{K}}$ be a homogeneous ideal of $A=A_{\mathbb{K}}$, and let $\tau$ be a monomial order. We define  the \emph{zero-generic initial ideal} of $I$ with respect to $\tau$ to be the ideal of $A_{\mathbb{K}}$  
 $$\gin_0(I):=\gin_{\tau}(\gin_{\tau}(I)_{\QQ})_{{\mathbb{K}}}.$$
We denote by $\Gin_0(I)$ the zero-generic initial ideal of $I$ with respect to the reverse lexicographic order.
\end{definition} 
Recall that by \cite{Co}, one has $\gin_{\tau}(I)=\gin_\tau(\gin_{\tau}(I)),$ and thus   $\gin_0(I)=\gin_0(\gin_{\tau}(I))$ for any monomial order $\tau.$  

\begin{proposition}\label{PRO} Let $I=I_{\mathbb{K}}$ be a homogeneous ideal of $A=A_{\mathbb{K}}$. 
\begin{enumerate}
\item \label{sameh} The ideal $\gin_0(I)$ is a strongly stable ideal of $A$; $I$ and $\gin_0(I)$ have the same Hilbert function.
\item \label{char0} When the characteristic of ${\mathbb{K}}$ is $0$, $\gin_0(I)=\gin(I)$ and $\Gin_0(I)=\Gin(I)$.
\item \label{cohom} For all $i$ and $j$, the following inequality between Hilbert functions of local cohomology modules holds
$$\Hilb\left(H^i_{\mm_A}(A/I)\right)_j\leq \Hilb\left(H^i_{\mm_A}(A/\gin_0(I))\right)_j.$$
In particular, when $(i,j-i)$ is a corner of $A/\gin_0(I)$, then $$\beta_{ij}(A/I)\leq \beta_{ij}(A/\gin_0(I)).$$

\item \label{boundreg} The projective dimension and Castelnuovo-Mumford regularity of $I$ are bounded above by those of $\gin_0(I)$.
\item \label{sameloc} For all $i,$  $\Hilb\left(H^i_{\mm_A}(A/\Gin(I))\right)=  \Hilb\left(H^i_{\mm_A}(A/\Gin_0(I))\right).$

\item \label{samereg} The ideals $I$ and $\Gin_0(I)$  have the same extremal Betti numbers and, therefore, same projective dimension and Castelnuovo-Mumford regularity. 

\item \label{betti12} For $i=0,1$ and all $j$, we have $\beta_{ij}(I)\leq \beta_{ij}(\rm{gin}_0(I)).$

\end{enumerate}
\end{proposition}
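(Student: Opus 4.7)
The seven items rest on three tools that I would invoke repeatedly: (a) Gr\"obner upper semicontinuity of the Hilbert functions of $H^i_{\mm_A}(A/-)$ and of the graded Betti numbers (cited in the introduction via \cite{Pa1,Sb}); (b) Theorem \ref{PardueCo}, which says that for any weakly stable ideal $J$ the Hilbert functions of the modules $H^i_{\mm_A}(A/J)$ are independent of the base field; and (c) Conca's idempotency $\gin_\tau(\gin_\tau(I))=\gin_\tau(I)$. For \eqref{sameh}, observe that $\gin_\tau((\gin_\tau(I))_{\QQ})$ is generic initial over $\QQ$, hence Borel-fixed in characteristic zero and therefore strongly stable; strong stability is a combinatorial condition on the monomial support, so it is inherited by the extension to $A_{\mathbb{K}}$. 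Hilbert-function preservation holds at each of the three steps involved (take $\gin_\tau$, change base field of a monomial ideal, take $\gin_\tau$ again). For \eqref{char0}, when $\chara\mathbb{K}=0$ the ideal $\gin_\tau(I)$ is already strongly stable; the same combinatorial support makes $(\gin_\tau(I))_\QQ$ strongly stable over $\QQ$, hence Borel-fixed there, and (c) gives $\gin_\tau((\gin_\tau(I))_\QQ)=(\gin_\tau(I))_\QQ$, whose extension back to $\mathbb{K}$ is $\gin_\tau(I)$.

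For \eqref{cohom}, I would chain (a) and (b):
\begin{align*}
\Hilb\!\left(H^i_{\mm_A}(A/I)\right) &\leq \Hilb\!\left(H^i_{\mm_A}(A/\gin_\tau(I))\right) = \Hilb\!\left(H^i_{\mm_{A_\QQ}}(A_\QQ/(\gin_\tau(I))_\QQ)\right)\\
&\leq \Hilb\!\left(H^i_{\mm_{A_\QQ}}(A_\QQ/\gin_\tau((\gin_\tau(I))_\QQ))\right) = \Hilb\!\left(H^i_{\mm_A}(A/\gin_0(I))\right),
\end{align*}
the two equalities being instances of (b) applied to the weakly stable ideals $\gin_\tau(I)$ and $\gin_0(I)$. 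The corner inequality on Betti numbers then follows from \eqref{extremalandcohom}. Part \eqref{boundreg} is an immediate consequence via the standard formulas $\projdim(A/J)=n-\min\{i:H^i_{\mm_A}(A/J)\neq 0\}$ and $\reg(A/J)=\max_i\{\End H^i_{\mm_A}(A/J)+i\}$, both of which are monotone in the Hilbert functions appearing in \eqref{cohom}.

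Part \eqref{sameloc} is the step I expect to be the main obstacle, since it forces me to combine the field-change invariance of (b) with a rev-lex--specific identity. Explicitly, I would sandwich the classical Bayer--Stillman-style equality $\Hilb(H^i_{\mm_A}(A/I))=\Hilb(H^i_{\mm_A}(A/\Gin(I)))$ for reverse-lexicographic order (which in arbitrary characteristic follows from the rev-lex identity $\ini(I:X_n^{\infty})=\ini(I):X_n^{\infty}$ together with the filter-regularity of a generic sequence of linear forms) between two applications of Theorem \ref{PardueCo}:
\begin{align*}
\Hilb\!\left(H^i_{\mm_A}(A/\Gin(I))\right) &= \Hilb\!\left(H^i_{\mm_{A_\QQ}}(A_\QQ/(\Gin(I))_\QQ)\right)\\
&= \Hilb\!\left(H^i_{\mm_{A_\QQ}}(A_\QQ/\Gin((\Gin(I))_\QQ))\right) = \Hilb\!\left(H^i_{\mm_A}(A/\Gin_0(I))\right),
\end{align*}
the middle equality being the classical rev-lex fact applied in characteristic zero to $(\Gin(I))_\QQ$, and the outer two being (b) applied to the weakly stable ideals $\Gin(I)$ and $\Gin_0(I)$. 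For \eqref{samereg}, combining \eqref{sameloc} with the same classical rev-lex identity applied to $I$ itself gives $\Hilb(H^i_{\mm_A}(A/I))=\Hilb(H^i_{\mm_A}(A/\Gin_0(I)))$ for all $i$, from which the equalities of projective dimension, Castelnuovo--Mumford regularity, and extremal Betti numbers follow via the formulas used in \eqref{boundreg} together with \eqref{extremalandcohom}.

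Finally, \eqref{betti12} is the Betti-number analogue of \eqref{cohom} restricted to $i\in\{0,1\}$. The only extra ingredient I would use is the well-known fact (recalled in the paragraph preceding Theorem \ref{PardueCo}) that $\beta_{ij}(J)=\beta_{ij}(J_\QQ)$ for any monomial ideal $J$ whenever $i\in\{0,1\}$. Replacing the two applications of Theorem \ref{PardueCo} in the chain for \eqref{cohom} by this equality, and the Hilbert-function upper semicontinuity of (a) by its graded-Betti-number counterpart, produces the required bound $\beta_{ij}(I)\leq\beta_{ij}(\gin_0(I))$.
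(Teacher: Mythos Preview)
Your arguments for \eqref{sameh}--\eqref{boundreg} and \eqref{betti12} match the paper's. The problem lies in the ``classical Bayer--Stillman-style equality'' $\Hilb(H^i_{\mm_A}(A/J))=\Hilb(H^i_{\mm_A}(A/\Gin(J)))$ that you invoke in \eqref{sameloc} and \eqref{samereg}: this equality is \emph{not} true for arbitrary $J$. What Bayer--Stillman proved is preservation of $\reg$ and $\depth$; what \cite{BaChPo} proved is preservation of extremal Betti numbers. Full equality of local-cohomology Hilbert functions holds precisely when $A/J$ is sequentially Cohen--Macaulay --- this is exactly the content of the Herzog--Sbarra criterion \eqref{HS-criterion}, and the example following Proposition~\ref{SCM} exhibits an ideal for which it fails. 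The rev-lex identity $\ini(J:X_n^{\infty})=\ini(J):X_n^{\infty}$ that you cite does not by itself yield more than the inequality already recorded in \cite{Sb}.

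In \eqref{sameloc} you are lucky: the ideal to which you apply the equality is $(\Gin(I))_\QQ$, which is weakly stable and hence sequentially Cohen--Macaulay by Proposition~\ref{seq}, so the middle equality does hold --- but for the reason the paper gives (Proposition~\ref{seq} plus \eqref{HS-criterion}), not for the reason you give. In \eqref{samereg}, however, the gap is genuine: you apply the same purported equality to $I$ itself, and $A/I$ need not be sequentially CM, so your conclusion $\Hilb(H^i_{\mm_A}(A/I))=\Hilb(H^i_{\mm_A}(A/\Gin_0(I)))$ is false in general. The paper does \emph{not} claim this; it instead invokes \cite{BaChPo} directly for the equality of extremal Betti numbers of $A/I$ and $A/\Gin(I)$, and then uses \eqref{extremalandcohom} together with \eqref{sameloc} to pass from $\Gin(I)$ to $\Gin_0(I)$.
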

\begin{proof} \eqref{sameh}: We already mentioned that in characteristic $0$ a generic initial ideal is strongly stable, and the defining exchange property is not affected when changing the field back. Also, Hilbert functions stay unchanged when taking generic initial ideals and change the coefficient field.

Part \eqref{char0} is also clear: since $\chara {\mathbb{K}}=0$, $\gin_{\tau}(I)$ is already strongly stable and, therefore, equal to $\gin_0(I)$. 

\eqref{cohom}: By \cite{Sb}, the Hilbert functions of local cohomology modules increase when taking initial ideals. The conclusion is yielded by using this fact and Theorem \ref{PardueCo} twice. The statement about extremal Betti numbers follows therefore by what we said in Subsection \ref{extremall}. 

\eqref{boundreg}: Since in a polynomial ring the projective dimension and the Castelnuovo-Mumford regularity can be read from local cohomology modules,  \eqref{boundreg} follows directly from the previous part. 

\eqref{sameloc}: Note that $\Gin(I)$ is weakly stable, and by  Proposition \ref{seq} it is sequentially CM. The desired equality follows from Theorem \ref{PardueCo} together with \eqref{HS-criterion}.

\eqref{samereg}: By \cite{BaChPo}, extremal Betti numbers are left unchanged after taking a generic initial ideal when the chosen monomial order is the revlex order. By  using \eqref{extremalandcohom} the desired equality follows directly from \eqref{sameloc}.

Finally, since the first two graded Betti numbers of a monomial ideal do not depend on the characteristic of the base field ${\mathbb{K}}$, part \eqref{betti12} is an immediate consequence of standard facts on initial ideals that we recalled in the introduction. 
\end{proof}

\begin{remark}\label{cakku}
It is reasonable to ask whether an analogue of Proposition \ref{PRO} (\ref{betti12}) is true for any homological index $i.$ Such inequality is clear only in a few special cases, for instance:  when $\Char(K)=0$, since $\gin_0(I)$ is $\gin(I)$;  when the ideal is  stable, since  a minimal free resolution of $I$ is given by the Eliahou-Kervaire resolution and therefore $\beta_{ij}(I)=\beta_{ij}(I_{\mathbb Q})$; when $\gin(I)$ is stable, by a similar reason; finally, when  $(i,j-i)$ is a corner for $\gin_0(I)$, the inequality is just a special case of  \eqref{cohom}.

In general, if we assume that there exist a homogeneous ideal $I=I_{\mathbb K}$ and indexes $i$, $j$ such that  $\beta_{ij}(I)> \beta_{ij}(\gin_0(I))$, the characteristic of $\mathbb{K}$ is necessarily $p>0$; moreover, if  we let $J=\gin(I)$, then  $\beta_{ij}(J)> \beta_{ij}(\gin_0(J))$, otherwise  $\beta_{ij}(J)\leq \beta_{ij}(\gin_0(J))=\beta_{ij}(\gin_0(I)) < \beta_{ij}(I)\leq \beta_{ij}(J)$.  Thus, if there is a counterexample to the generalization of Proposition \ref{PRO} \eqref{betti12} to any homological index $i$, this can be chosen to be a $p$-Borel ideal which is also a counterexample to the conjecture of Pardue discussed earlier.  We believe that  ideals with these properties, which can be constructed with the method found in \cite{CaKu}, could be suitable candidates. For instance, one could consider the ideal of $K[X_1,\dots,X_6]$ defined as 
\begin{align*}
J & = 
(X_1^8, X_2^{32}, X_1X_2^8X_3^{32}, X_3^{128}, X_1X_2^8X_4^{128},
X_4^{512}, X_1X_3^{32}X_5^{512}, X_2^8X_4^{128}X_5^{512},
X_3^{32}X_4^{128}X_5^{512},  \\
& \qquad 
X_5^{2048}, X_2^8x_3^{32}X_6^{2048}, X_1X_4^{128}X_6^{2048},
uX_3^{32}X_4^{128}X_6^{2048}, X_1X_5^{512}X_6^{2048},
X_2^8X_5^{512}X_6^{2048}).
\end{align*}
 On the other hand, the extremely large generating degree of such ideals make the computation of their zero-generic initial ideals challenging.
\end{remark}

\subsection{A criterion for sequentially Cohen-Macaulayness}
In \cite{HeSb}, a criterion for a quotient algebra of $A$ to be sequentially Cohen-Macaulay is given: this is the case exactly when the Hilbert functions of local cohomology modules do not change when taking the generic initial ideal with respect to the revlex order, i.e. $A/I$ is sequentially Cohen-Macaulay precisely when 
\begin{equation}\label{HS-criterion}
\Hilb\left(H^i_\mm(A/I)\right)_j=\Hilb\left(H^i_\mm(A/\Gin(I))\right)_j \hbox{\;\;\;for all\;\;\;} i,j.
\end{equation}
\noindent

The following is a straightforward consequence of the \eqref{HS-criterion} and Proposition \ref{PRO} \eqref{sameloc}.

\begin{proposition}[Criterion for sequentially Cohen-Macaulayness]\label{SCM} Let $I$ be a homogeneous ideal of $A$. Then, $A/I$ is sequentially Cohen-Macaulay if and only if the local cohomology modules of $A/I$ and $A/\Gin_0(I)$ have same Hilbert functions.
\end{proposition}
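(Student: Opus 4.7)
The plan is to chain two facts together: the Herzog--Sbarra criterion \eqref{HS-criterion}, which characterizes sequential Cohen--Macaulayness of $A/I$ as the equality of Hilbert functions of local cohomology $\Hilb(H^i_{\mm_A}(A/I))_j = \Hilb(H^i_{\mm_A}(A/\Gin(I)))_j$ for all $i,j$, and Proposition \ref{PRO} \eqref{sameloc}, which asserts that $\Hilb(H^i_{\mm_A}(A/\Gin(I))) = \Hilb(H^i_{\mm_A}(A/\Gin_0(I)))$ for every $i$.

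Concretely, I would argue as follows. For the forward direction, assume $A/I$ is sequentially Cohen--Macaulay. By \eqref{HS-criterion} applied to $I$, the Hilbert functions of $H^i_{\mm_A}(A/I)$ and $H^i_{\mm_A}(A/\Gin(I))$ coincide for all $i,j$. Substituting the identity from \eqref{sameloc}, we obtain $\Hilb(H^i_{\mm_A}(A/I))_j = \Hilb(H^i_{\mm_A}(A/\Gin_0(I)))_j$ for all $i,j$, which is the desired equality. For the converse, start from this last equality and apply \eqref{sameloc} in reverse to recover $\Hilb(H^i_{\mm_A}(A/I))_j = \Hilb(H^i_{\mm_A}(A/\Gin(I)))_j$, then conclude via \eqref{HS-criterion} that $A/I$ is sequentially Cohen--Macaulay.

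There is really no serious obstacle here: both implications are formal substitutions once one has \eqref{HS-criterion} and Proposition \ref{PRO} \eqref{sameloc} in hand. The only thing worth double-checking is that \eqref{sameloc} is stated as an equality for every cohomological degree $i$ (it is), so that the substitution can be carried out uniformly in $i,j$. No additional lemmas, no case analysis by characteristic, and no further properties of weakly stable ideals are required at this step, because all the heavy lifting was already done in Theorem \ref{PardueCo} and Proposition \ref{PRO}.
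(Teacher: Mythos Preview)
Your argument is correct and matches the paper's own proof exactly: the paper simply states that the proposition is a straightforward consequence of the Herzog--Sbarra criterion \eqref{HS-criterion} together with Proposition \ref{PRO} \eqref{sameloc}, which is precisely the chain of substitutions you carry out.
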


\begin{example}  For a monomial ideal, being sequentially CM may depend on the characteristic of the given base  field. We consider the Stanley-Reisner ideal $(X_1X_2X_3, X_1X_2X_5,$ $X_1X_3X_6,X_1X_4X_5, X_1X_4X_6, X_2X_3X_4, X_2X_4X_6, X_2X_5X_6, X_3X_4X_5, X_3X_5X_6)$
 in the polynomial ring  $A={\mathbb{K}}[X_1,\ldots,X_6]$ of the minimal triangulation of $\mathbb P^2_\RR$. The quotient ring $A/I$ is Cohen-Macaulay only when $\chara {\mathbb{K}}\not =2$. When ${\mathbb{K}}=\ZZ/2$ instead, $A/I$ is not sequentially Cohen-Macaulay. In fact $\Ext^3_A(A/I,A)\neq 0$ and its Krull dimension is equal to $3$, whereas $\depth \Ext^3_A(A/I,A)=2$. Thus, $\Ext^3_A(A/I,A)$ is not Cohen-Macaulay and, therefore, $A/I$ is not sequentially Cohen-Macaulay, cf. for instance \cite{HeSb} again.
 
Alternatively, recall that,  by \cite{HeReWe} and \cite{HeHi}, a square-free monomial ideal is sequentially Cohen-Macaulay if and only if its Alexander dual is component-wise linear, cf. Subsection \ref{composaggio}. It is easy to see that the Alexander dual of $I$ is $I$ itself; moreover, since $I$ is generated in degree $3$, $I$ is component-wise linear if and only if $\reg I=3$ and this is true only when $\chara{\mathbb{K}}\not=2$.
\end{example}

\subsection{The Crystallization Principle}\label{CRY} 
 One of the most useful properties of generic initial ideals, which holds true when  $\rm{char}({\mathbb{K}})=0$ and is false in general, is what Green called the {\em Crystallization Principle} in \cite{Gr}. In Theorem \ref{crys}, we prove the analogous statement  for zero-generic initial ideals without any assumption on the characteristic. We include in the following  a proof of the standard case,  cf. also \cite[29.3]{Pe}.

\noindent

\begin{theorem} [Crystallization Principle for generic initial ideals] \label{crys_0}
Let $\Char(\mathbb K)=0,$ $\tau$ a monomial order, and  $d$  an integer such that $I$ has no minimal generator of degree $d$ or larger. If $\gin(I)$ has no minimal generator in degree $d$, then it also has no minimal generator in degree larger than $d$.  
\end{theorem}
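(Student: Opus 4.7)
I plan to prove Theorem~\ref{crys_0} by induction on degree, leveraging the strong stability of $J := \gin_\tau(I)$ in characteristic zero together with the flat Gr\"obner degeneration from a generic $gI$ to $J$. Since $\Char \mathbb{K} = 0$, the ideal $J$ is strongly stable. The hypothesis on $I$ means $I_k = A_1 \cdot I_{k-1}$ for every $k \geq d$, so $I_k = A_{k-d+1} \cdot I_{d-1}$ by iteration; the hypothesis on $J$ means $J_d = A_1 \cdot J_{d-1}$; the conclusion to be established is $J_k = A_1 \cdot J_{k-1}$ for every $k > d$.

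Fix a generic $g \in \GL_n$ with $J = \ini_\tau(gI)$. Constancy of the Hilbert function along the Gr\"obner degeneration yields $\dim J_k = \dim(gI)_k = \dim A_{k-d+1} \cdot (gI)_{d-1}$ for every $k \geq d$, while upper semicontinuity of graded Betti numbers (\cite{Sb}, \cite{Pa1}) gives $\dim A_l \cdot J_{d-1} \leq \dim A_l \cdot (gI)_{d-1}$ for every $l \geq 1$, with equality at $l = 1$ forced by the hypothesis on $J_d$. I then induct on $k \geq d$: assuming $J_l = A_1 \cdot J_{l-1}$ for all $d \leq l \leq k$ (equivalently $J_k = A_{k-d+1} \cdot J_{d-1}$), I want $J_{k+1} = A_{k-d+2} \cdot J_{d-1}$, which amounts to propagating the equality $\dim A_l \cdot J_{d-1} = \dim A_l \cdot (gI)_{d-1}$ from $l = 1$ up to $l = k-d+2$. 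I argue by contradiction: a hypothetical new minimal generator $u$ of $J$ in degree $k+1$ would satisfy $u/X_{m(u)} \notin J$ while, by strong stability, $X_i \cdot (u/X_{m(u)}) \in J$ for every $i < m(u)$. The inductive hypothesis places each $X_i \cdot (u/X_{m(u)})$ in $A_{k-d+1} \cdot J_{d-1}$; lifting back along $\ini_\tau$ to elements of $gI$ and exploiting the generic choice of $g$ together with the strongly stable structure of $J$ should then force $u/X_{m(u)} \in J$, the desired contradiction.

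\textbf{Main obstacle.} Strong stability alone does not suffice for this propagation: for instance $J' = (X_1^2,\, X_1 X_2,\, X_2^4) \subset \mathbb{K}[X_1, X_2]$ satisfies the $l = 1$ equality with $d = 3$ but fails at $l = 2$, and, crucially, $J'$ is \emph{not} of the form $\gin_\tau(I)$ for any $I$ generated in degrees $\leq 2$ (its Hilbert function is incompatible with such an $I$). Hence the proof must truly use that $J$ is a generic initial ideal, not merely strongly stable. The hard part, where I expect to spend most of the effort, is making the Borel-move lifting step precise enough to produce the desired contradiction---this is exactly where the characteristic zero hypothesis becomes essential, since Borel-fixed ideals fail to be strongly stable in positive characteristic and the entire propagation argument relies on having the full strongly stable exchange property available.
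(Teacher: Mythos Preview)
Your plan has a real gap at exactly the point you flag. The ``Borel-move lifting'' step is not a mechanism: knowing that $X_j\,(u/X_{m(u)})\in J_k$ for $j\le m(u)$ and lifting each of these to an element of $gI$ gives you no handle on $u/X_{m(u)}$ itself. There is no canonical way to pull monomial relations back through $\ini_\tau$, and genericity of $g$ does not manufacture one; you would need to produce an element of $gI$ with leading term $u/X_{m(u)}$ (or $u$) from those lifts, and nothing in your setup does that. Your own example already shows that the combinatorics of strong stability cannot close this gap, and the inductive bookkeeping on $\dim A_l\cdot J_{d-1}$ versus $\dim A_l\cdot (gI)_{d-1}$ never gets beyond the trivial containment $(J_{d-1})\subseteq \ini_\tau(gI)$.

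The paper's argument avoids the induction entirely by isolating the one structural fact you are missing. Set $J'=(\gin(I)_{d-1})$. Because $\Char\mathbb{K}=0$, $J'$ is strongly stable and generated in the single degree $d-1$, so by Eliahou--Kervaire $\reg J'=d-1$; in particular the \emph{first syzygies of $J'$ are linear}. Now run Buchberger's algorithm on a basis of $(gI)_{d-1}$ whose initial terms are the generators of $J'$ (Algorithm 15.9 in \cite{Ei}): every S-pair is governed by a syzygy of $J'$, hence lives in degree $d$. The hypothesis $J_d=J'_d$ forces each S-pair to reduce to zero. Thus $(gI)_{d-1}$ already spans a Gr\"obner basis of $gI$, giving $\gin(I)=J'$ in one stroke. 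The linear-syzygy fact is the genuine content; it replaces your entire propagation argument and is where characteristic zero actually enters.
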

\begin{proof} Recall that $\gin(I)$ is strongly stable. By substituting $I$ with the ideal generated by its degree $d-1$ component, if necessary, we can assume that $I$ is generated in degree $d-1.$ Let $J$ be the ideal generated by $\gin(I)_{d-1}.$ We see that $J$ is strongly stable, generated in a single degree, and $\reg J={d-1}.$ Let $g$ be a change of coordinates such that $\gin(I)= \ini (gI).$ We can compute a Gr\"obner basis of $gI$ by considering the S-pairs arising from a set of minimal generators of the first syzygies module of $J$, see Algorithm 15.9 of \cite{Ei} and the discussion that precedes it.  All such syzygies are linear, and the corresponding S-pairs reduce to zero, since $(\ini gI)_{d}=J_{d}$ and  $\gin(I)$ has no generators in degree $d.$ This shows that $(gI)_{d-1}$ is spanned by a Gr\"obner basis for $gI.$ Hence $\gin(I)=J,$ as desired. 
\end{proof}

\begin{theorem} [Crystallization Principle for zero-generic initial ideals]\label{crys}
Let $\tau$ be a monomial order, and $d$ be an integer such that $I$ has no minimal generator of degree $d$ or larger. If $\gin_0(I)$ has no minimal generator in degree $d$, then it also has no minimal generator in degree larger than $d$.  
\end{theorem}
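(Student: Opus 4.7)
The plan is to reduce Theorem \ref{crys} to the characteristic-zero Theorem \ref{crys_0} applied over $A_{\QQ}$. Set $H := \gin_\tau(I)$ and $\tilde L := \gin_\tau(H_{\QQ}) \subseteq A_{\QQ}$, so that $\gin_0(I) = \tilde L_{\mathbb{K}}$ and in particular $\gin_0(I)$ and $\tilde L$ share the same minimal monomial generators. Following the reduction used in the proof of Theorem \ref{crys_0}, I first replace $I$ by the ideal generated by $I_{d-1}$ and thereby assume that $I$ is generated in the single degree $d-1$; this substitution does not affect the minimal generators of $\gin_0(I)$ in degrees $\geq d-1$, because $I$ and $(I_{d-1})$ coincide in those degrees and the operations defining $\gin_0$ respect such degreewise agreement.

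The key claim is that $H_{\QQ}$ is generated in degrees $\leq d-1$; equivalently that $H$ is, since the minimal generators of a monomial ideal are determined combinatorially and are independent of the base field. Granting this, Theorem \ref{crys_0} applied to $H_{\QQ}$ in $A_{\QQ}$ with threshold $d$, combined with the hypothesis that $\tilde L=\gin_\tau(H_{\QQ})$ has no minimal generator in degree $d$, yields that $\tilde L$ has no minimal generator in degree larger than $d$; transferring back to $\mathbb{K}$ gives the desired conclusion for $\gin_0(I)$. To prove the key claim I would start from the upper-semicontinuity chain
\[
\dim A_k \tilde L_{d-1} \;\leq\; \dim A_k H_{d-1} \;\leq\; \dim A_k I_{d-1} \;=\; \dim I_{d-1+k},
\]
valid for every $k \geq 1$. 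For $k = 1$, the hypothesis on $\tilde L$ forces the leftmost term to equal $\dim I_d$, collapsing the chain to equalities and in particular showing that $H$ has no generator in degree $d$.

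To extend this to all $k \geq 2$, I would introduce the subideal $K_{\QQ} := ((H_{\QQ})_{d-1}) \cdot A_{\QQ}$, generated in the single degree $d-1$, and apply Theorem \ref{crys_0} to it in $A_{\QQ}$. Because $\gin_\tau(K_{\QQ}) \subseteq \tilde L$ with equality in degrees $d-1$ and $d$ (by the dimension matching just established), $\gin_\tau(K_{\QQ})$ has no minimal generator in degree $d$, and Theorem \ref{crys_0} then forces $\gin_\tau(K_{\QQ})$ to be generated in degree $d-1$ alone. The main obstacle is the final Hilbert-function comparison: showing that the Hilbert function of the strongly stable ideal $(\tilde L_{d-1}) \cdot A_{\QQ}$ coincides with that of $I$ in every degree $\geq d-1$, which, together with the inclusion $K_{\QQ} \subseteq H_{\QQ}$, forces $K_{\QQ} = H_{\QQ}$. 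I expect this step to require a Gotzmann-persistence-type argument, promoting the minimal-growth equality at $k = 1$ to an equality $\dim A_k \tilde L_{d-1} = \dim A_k I_{d-1}$ for all $k \geq 2$; this is precisely the feature that does not arise in the proof of Theorem \ref{crys_0}, where $H$ and $\tilde L$ coincide so the question is vacuous, and it encapsulates the substantive novelty introduced by the zero-generic construction.
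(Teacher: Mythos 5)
Your reduction, your $k=1$ computation, and your use of Theorem \ref{crys_0} on the subideal $K_{\QQ}=((H_{\QQ})_{d-1})$ all match the paper's argument (your $H$, $\tilde L$, $K_\QQ$, $\gin_\tau(K_\QQ)$ are the paper's $J$, $\gin(J_\QQ)$, $(J_{d-1})_\QQ$, $P$). But the step you flag as ``the main obstacle'' is a genuine gap, and the mechanism you propose for it does not work. Gotzmann persistence requires that the growth from degree $d-1$ to degree $d$ be the \emph{minimal} (Macaulay) growth, which is achieved by the lex segment; here $\tilde L_{d-1}$ is only strongly stable, its growth need not be minimal, and there is no general principle saying that two ideals generated in degree $d-1$ with equal Hilbert functions in degrees $d-1$ and $d$ keep equal Hilbert functions afterwards. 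Indeed, after your application of Theorem \ref{crys_0} to $K_\QQ$ the remaining equality $\dim A_kH_{d-1}=\dim H_{d-1+k}$ \emph{is} the key claim ($H$ generated in degree $d-1$), so the ``persistence'' you hope for is equivalent to what you are trying to prove; Example \ref{nocrys} (where $\gin(X_1^6,X_2^6)=(X_1^6,X_1^3X_2^3,X_2^9)$ agrees with $(X_1^6,X_1^3X_2^3)$ in degrees $6,7,8$ but not in degree $9$) shows that Hilbert-function agreement in two consecutive degrees cannot by itself propagate.

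The paper closes this gap by extracting syzygy information from characteristic zero rather than Hilbert-function information. Since $P:=\gin_\tau(K_\QQ)$ is strongly stable and generated in degree $d-1$, it has regularity $d-1$, so $\beta_{1,j}(P)=0$ for $j>d$; by upper semicontinuity of Betti numbers under $\gin$, $\beta_{1,j}(K_\QQ)=0$ for $j>d$; and because $\beta_0$ and $\beta_1$ of a \emph{monomial} ideal are characteristic-independent, $\beta_{1,j}((H_{d-1}))=0$ for $j>d$ over $\mathbb{K}$ as well, i.e.\ the first syzygies of $(H_{d-1})$ are linear. One then reruns the Buchberger/S-pair argument from the proof of Theorem \ref{crys_0} over $\mathbb{K}$: writing $H=\ini_\tau(gI)$ for a general $g$, the S-pairs of $(gI)_{d-1}$ arising from a minimal generating set of the first syzygies of $(H_{d-1})$ all live in degree $d$, and they reduce to zero because $H_d=A_1H_{d-1}$ (your $k=1$ step). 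Hence $(gI)_{d-1}$ is a Gr\"obner basis, $H$ is generated in degree $d-1$, and your appeal to Theorem \ref{crys_0} for $H_\QQ$ then finishes the proof exactly as you outlined. So the missing idea is the transfer of \emph{linearity of first syzygies} across characteristics, not a growth-of-Hilbert-functions statement.
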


\begin{proof} 
Without loss of generality, we may  assume the base field ${\mathbb{K}}$ to be infinite, by extending it if it is needed, and that $I$ is generated in degree $d-1$.

Let $J=\ini(gI)=\gin(I)$ for  a general linear change of coordinates $g$,  and assume  that $\gin_0(I)=\gin(J_\QQ)_{\mathbb{K}}$ has no minimal generator in degree $d$. Notice that $\gin(J_\QQ)$ and, thus, $J_\QQ$ and $J$  have no minimal generator of degree $d$ as well. 

We now denote by $P=P_\QQ$ the generic initial ideal of 
$(J_{d-1})_{\mathbb Q}$, and we observe that, by assumption, $P$ agrees with $\gin(J_{\mathbb Q})$ in degree $d$; thus, $P$ has no minimal generator in degree $d$  and,  by Theorem \ref{crys_0}, $P$ is generated in degree $d-1$. Furthermore,  $P$ is strongly stable and, thus, its Castelnuovo-Mumford regularity is precisely $d-1$. Moreover, $\beta_{1,j}(P)\geq \beta_{1,j}((J_{d-1})_\mathbb{Q})= \beta_{1,j}(J_{d-1})$ since the first Betti numbers of monomial ideals do not depend on the characteristic.
 Hence, the first syzygies of  $(J_{d-1})$ are linear. By arguing as in the proof of the previous theorem, it follows that $J$ is generated in degree $d-1.$ Thus $\gin(J_{\mathbb Q})=P$ and
this yields that $\gin_0(I)=(\gin(J_\QQ))_{\mathbb{K}}=P_{\mathbb{K}}$ is generated in degree $d-1$, as desired. 
\end{proof}

The reason why the above theorem is unexpected is due to the fact that, by definition, computing $\gin_0(I)$ when the characteristic of the field if positive requires as an intermediate step the calculation of $\gin(I)$, which does not satisfy  Crystallization (with respect to $I$), as we show in the following example. 
  
\begin{example}\label{nocrys} 
Let ${\mathbb{K}}$ be a field of characteristic $3$, and let $J$ be the ideal of ${\mathbb{K}}[X_1,X_2]$  generated by $(X_1^6,X_2^6).$ One can verify, as we will explain below, that for a monomial order such that $X_1>X_2$ one has $\gin(J)=(X_1^6,X_1^3X_2^3,X_2^9).$ Notice a gap in degree $7$ and $8,$ where there is no minimal generator.
On the other hand, since $\gin_0(J)$ is strongly stable, it is equal to the lex-segment ideal with the same Hilbert function as $J$, that is  $(X_1^6,X_1^5X_2,X_1^4X_2^3,X_1^3X_2^5,X_1^2X_2^7,X_1X_2^9,X_2^{11}).$ 

By using the Frobenius map it is easy to create many examples of ideals for which $\gin(I)$ does not satisfy the Crystallization Principle. Let ${\mathbb{K}}$ be a field of characteristic $p$ and let $F$ be the Frobenius  map $A \rightarrow A$ such that $f\mapsto f^p.$ For any monomial order $\tau$, the map $F$ commutes with computing generic initial ideal (see \cite{CaSb}), i.e., $F(\gin_{\tau}(I))=\gin_{\tau}(F(I))$. For instance, in the example above $J$ is $F(X_1^2,X_2^2)$, and  $\gin(X_1^2,X_2^2)$ is the lex-segment ideal $(X_1^2,X_1X_2,X_2^3).$

For every homogeneous ideal $I$ with generators in degree at most $d$ and such that $\gin_{\tau}(I)$ has a generator in degree $d+1$, we see that $F(I)$ is generated in degree at most $pd$, and $\gin_{\tau}(F(I))$ has a generator in degree $pd+p$ and no generator in degree $pd+p-1$. 
\end{example}

\subsection{A criterion for component-wise linearity}\label{composaggio}
A consequence of Theorem \ref{crys}, and of the method used in its proof, is a characteristic-free adaptation of a result which we believe to be well-known in characteristic zero .

We recall that a homogeneous ideal $I$ of $A$ is \emph{component-wise linear} if, for every degree $d,$ the  ideal $I_{\vert d}$ generated by $I_d$, has a linear graded free resolution. This is equivalent to saying that for every $d$, $I_{\vert d}$ is either $0$ or has Castelnuovo-Mumford regularity equal to $d.$ \\
Component-wise linear ideals have many remarkable properties, see \cite{ArHeHi}, \cite{HeHi}, \cite{Co} and \cite{NaR\"o}. 
In particular, \cite{HeReWe}, \cite{HeHi} proved that a squarefree monomial ideal is component-wise linear if and only if its Alexander dual is sequentially CM. Also, it was proved in \cite{ArHeHi} that,  when $\Char(\mathbb K)=0,$ an ideal $I$ is component-wise linear if and only if  $I$ and $\Gin_0(I)$ have the same number of minimal generators. By using $\Gin_0(I)$, we present below the analogue, in any characteristic, of this result.

\begin{theorem}[Criterion for component-wise linearity]\label{CompLin}
An ideal $I$ is component-wise linear if and only if $I$ and $\Gin_0(I)$ have the same number of minimal  generators. Moreover, when $I$ and $\gin_0(I)$ have same number of generators, $I$ is component-wise linear.
\end{theorem}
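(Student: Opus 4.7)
The plan is to combine the obvious chain of inequalities coming from the two-step construction of $\gin_0(-)$, the classical Aramova--Herzog--Hibi criterion in characteristic zero, and the Crystallization Principle of Theorem \ref{crys}, together with the compatibility of $\gin_\tau$ with truncations. Write $\mu_d(J)$ for the number of minimal generators of $J$ of degree $d$. For every monomial order $\tau$ and every $d$,
\begin{equation*}
\mu_d(I) \le \mu_d(\gin_\tau(I)) = \mu_d\bigl(\gin_\tau(I)_{\QQ}\bigr) \le \mu_d\bigl(\gin_\tau(\gin_\tau(I)_{\QQ})\bigr) = \mu_d(\gin_0(I)),
\end{equation*}
where the outer inequalities follow from Proposition \ref{PRO}(\ref{betti12}) applied over $\mathbb{K}$ and over $\QQ$, and the middle equality reflects the combinatorial nature of the minimal monomial generators of a monomial ideal under a change of base field. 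In particular the total equality $\mu(I)=\mu(\gin_0(I))$ is equivalent to equality degree by degree at each step.

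For the \emph{moreover} assertion, assume $\mu(I) = \mu(\gin_0(I))$. Then $\mu(\gin_\tau(I)_{\QQ}) = \mu(\gin_\tau(\gin_\tau(I)_{\QQ}))$, and the classical Aramova--Herzog--Hibi criterion applied in the characteristic-zero ring $A_{\QQ}$ tells us that $\gin_\tau(I)_{\QQ}$ is component-wise linear. Since each truncation $(\gin_\tau(I))_{\vert d}$ is weakly stable, Corollary \ref{regbase} transfers this back to $\mathbb{K}$, giving $\reg\bigl((\gin_\tau(I))_{\vert d}\bigr) = d$. The elementary identity $\gin_\tau(I_{\vert d}) = (\gin_\tau(I))_{\vert d}$, which comes from $(gI)_{\vert d} = g(I_{\vert d})$ for any change of coordinates $g$, together with the upper semi-continuity of the Castelnuovo--Mumford regularity under initial ideals, then yields $\reg(I_{\vert d}) \le \reg(\gin_\tau(I_{\vert d})) = d$. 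Since $I_{\vert d}$ is generated in degree $d$ the reverse inequality is automatic; hence $I$ is component-wise linear.

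For the forward direction of the iff (with $\tau$ the revlex order), assume $I$ is CW linear. Applying the truncation identity twice inside the two-step construction of $\Gin_0$, together with the commutativity of truncation and base change for monomial ideals, gives $\Gin_0(I)_{\vert d} = \Gin_0(I_{\vert d})$. By Proposition \ref{PRO}(\ref{samereg}), $\reg(\Gin_0(I_{\vert d})) = \reg(I_{\vert d}) = d$; since $\Gin_0(I_{\vert d})$ is strongly stable, this forces it to be generated in degree $d$, and Crystallization (Theorem \ref{crys}) precludes generators in higher degrees. An induction on $d$, using the Hilbert-function equality of Proposition \ref{PRO}(\ref{sameh}), then translates these degree-$d$ identities for the truncations into the global equalities $\mu_d(I) = \mu_d(\Gin_0(I))$.

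The \textbf{main obstacle} I anticipate is precisely this last inductive accounting: transferring equality of generators of the truncations $I_{\vert d}$ and $\Gin_0(I_{\vert d})$ into an equality of the minimal generators of $I$ and $\Gin_0(I)$ in each degree, while using Crystallization to exclude the ``phantom'' higher-degree generators that the two-step construction of $\Gin_0$ could, a priori, produce. Here one must carefully unwind the contribution of $\mm_A \cdot I_{d-1}$ and exploit that the regularity of weakly stable ideals is characteristic-independent (Corollary \ref{regbase}).
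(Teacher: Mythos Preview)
Your argument has a genuine gap: the ``elementary identity'' $\gin_\tau(I_{\vert d}) = (\gin_\tau(I))_{\vert d}$ is \emph{false} in general. From $(gI)_{\vert d} = g(I_{\vert d})$ you only get $\gin_\tau(I_{\vert d}) = \ini_\tau\bigl((gI)_{\vert d}\bigr)$, and taking initial ideals does \emph{not} commute with truncation: one always has $(\ini_\tau J)_{\vert d}\subseteq \ini_\tau(J_{\vert d})$, but the inclusion is strict whenever $J_{\vert d}$ acquires new initial terms in degrees $>d$. For instance, with $I=(X_1^2-X_2^2,\,X_1X_2)\subset\mathbb{K}[X_1,X_2]$ and $d=2$ one has $\Gin(I)=(X_1^2,X_1X_2,X_2^3)=\Gin(I_{\vert 2})$, whereas $(\Gin(I))_{\vert 2}=(X_1^2,X_1X_2)$. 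In the forward direction this is not fatal: once you know $\reg\Gin_0(I_{\vert d})=d$ (from Proposition~\ref{PRO}\eqref{samereg}) and that $(\Gin_0(I_{\vert d}))_d=(\Gin_0(I))_d$, the identity $\Gin_0(I_{\vert d})=(\Gin_0(I))_{\vert d}$ actually \emph{follows}; but it is a consequence of the component-wise linear hypothesis, not an ``elementary'' input.

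In the \emph{moreover} direction, however, the identity is used before component-wise linearity is established, and there the step breaks down. Even granting that $\gin_\tau(I)_\QQ$ is component-wise linear (which, incidentally, for arbitrary $\tau$ is not the classical Aramova--Herzog--Hibi statement; that result is for revlex), you obtain only $\reg\bigl((\gin_\tau(I))_{\vert d}\bigr)=d$, and there is no passage from this to $\reg\bigl(\gin_\tau(I_{\vert d})\bigr)=d$ without the false identity. The paper circumvents this entirely: from $\mu_d(I)=\mu_d(\gin_0(I))$ and the Hilbert-function equality one deduces directly that $\gin_0(I_{\vert d})$ has no minimal generator in degree $d+1$, and then Theorem~\ref{crys} (Crystallization for $\gin_0$) forces $\gin_0(I_{\vert d})$ to be generated in degree $d$; Proposition~\ref{PRO}\eqref{boundreg} then gives $\reg I_{\vert d}\le d$. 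The key point your approach misses is that Crystallization must be applied to $\gin_0(I_{\vert d})$, not merely transferred from a statement about $(\gin_\tau(I))_{\vert d}$.
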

\begin{proof} 
For all $d$,  $\Gin_0(I_{\vert d})$ has no minimal generator in degree $d+1$ by Proposition \ref{PRO} (\ref{samereg}) and,  therefore,  $(I_{\vert d})_{d+1}$ and $(\rm{Gin}_0(I_{\vert d}))_{d+1}$ have the same dimension; thus, the Hilbert functions of $I/\mm_AI$ and $\Gin_0(I)/\mm_A\rm{Gin}_0(I)$ are the same and, consequently, $I$ and $\Gin_{0}(I)$ have same number of minimal generators.

Now, let $\tau$ be a monomial order and assume that $I$ and $\gin_0(I)$ have same number of minimal generators.
Proposition \ref{PRO} (\ref{betti12}) yields $\beta_{0j}(I)=\beta_{0j}(\gin_0(I))$, for all $j$ and this,  together with Proposition \ref{PRO} \eqref{sameh}, implies that the Hilbert functions of $\mm_AI$ and $\mm_A\gin_0(I)$ are the same. Equivalently, for every  $d$,   $\gin_0(I_{\vert d})$ has no minimal generator in degree $d+1.$ Theorem (\ref{crys}) implies that when it is not zero,  $\gin_0(I_{\vert d})$  is generated in degree $d$; hence,  $\gin_0(I_{\vert d})$ has  regularity $d$ since it is strongly stable. By Proposition \ref{PRO} (\ref{boundreg}), the ideal $I_{\vert d}$ is either zero or it has regularity $d$, as desired.
 \end{proof}

\subsection{Castelnuovo-Mumford regularity of general restrictions}
One important property of the reverse lexicographic order is that taking initial ideals commutes with respect to going modulo the last variables, as we recalled in the introduction.  As a consequence, generic initial ideals with respect to such an order give some  information also on restrictions to general linear spaces. Throughout this section, thus, we let $\tau$  be the reverse lexicographic order. It is not restrictive to assume, and we do, that  $\vert {\mathbb{K}} \vert =\infty.$ \vspace{.1cm}

Let $l_n,\dots,l_{i+1}$ be linear forms of $A$ such that $X_1,\dots,X_i,l_{i+1},\dots,l_n$ form an ordered basis of $A_1$.  By defining a change of coordinates $g$ which maps this basis to $X_1,\dots,X_n$, given  any homogeneous ideal $I$ of $A$, we let the  \emph{ restriction of $I$ to $A_{[i]}$ with respect to $l_n,\dots,l_{i+1}$} to be the image of $gI$ in $A_{[i]}$ via the isomorphism $A/(X_n,\dots,X_{i+1})\simeq {\mathbb{K}}[X_1,\dots,X_i]$.

\begin{definition} We say that  \emph{a general restriction of $I$ to $A_{[i]}$  satisfies a property $\mathcal{P}$} if there exists a non-empty Zariski open set of $(\mathbb {P}^{n-1})^{n-i}$ whose points $([l_n],...,[l_{i+1}])$ are such that the restriction of $I$ to $A_{[i]}$ with respect to  $l_n,\dots,l_{i+1}$ satisfies $\mathcal{P}$.
\end{definition}\label{needed}

\begin{remark} It is relevant for the following to notice that, for a homogeneous ideal $I$ of $A$ and a general restriction  $J$ of $I$ to $A_{[i]}$, the ideal
$\Gin(J)$ is well-defined. In fact,  the ideal  $\Gin(J)$ is equal to $\Gin(I)_{[i]}$, which is  the general restriction of $\Gin(I)$ to $A_{[i]}$, see for instance \cite{Gr} Theorem 2.30 (4). Hence, 
\begin{equation} \label{GIN-REG}
\reg A/(I+ (l_n,\dots,l_{i+1}))=\reg A/(\Gin(I)+(X_n,\dots, X_{i+1}))= \reg A_{[i]}/ \Gin(I)_{[i]},
\end{equation}
for general linear forms $l_n,\dots,l_{i+1}$; moreover, $\reg J=\reg \Gin(I)_{[i]}$.
\end{remark}

From the above observations, we can conclude that,  for a homogeneous ideal $I$ of $A$ and a general restriction  $J$ of $I$ to $A_{[i]}$, also $\Gin_0(J)$ is well-defined; unfortunately, though, $\Gin_0(J)$ is not the general restriction to $A_{[i]}$ of $\Gin_0(I)$. We observe that the latter is the ideal  $\Gin_0(I)_{[i]}$, since $\Gin_0(I)$ is strongly stable. Therefore the second equality in \eqref{GIN-REG} is still valid for $\Gin_0(\cdot)$, whereas the first one  is false in general. The following example illustrates such a situation. 

\begin{example}
Let $I=(X_1^2,X_2^2,X_3^2)\subset A={\mathbb{K}}[X_1,X_2,X_3]$ and $\chara{\mathbb{K}}=2$. Since the ideal $I$ is $2$-Borel, $\Gin(I)=I$ and also the general restriction $J$ of $I$ to $A_{[2]}$ is $(X_1^2,X_2^2).$ Moreover, $\Gin_0(J)=(X_1^2,X_1X_2,X_2^3)$ whereas $\Gin_0(I)_{[2]}= (X_1^2,X_1X_2,X_2^2)$. Furthermore,  $2=\reg A/(I+(l_3))> \reg A/(\Gin_0(I)+(X_3))=1$. 
\end{example}

We are going to show in Theorem \ref{restr} that one inequality is still valid and it provides a lower bound for the regularity of general restrictions in terms of zero-generic initial ideals. To this purpose, we prove first  a technical fact which will be crucial in our proof.  

\begin{proposition}\label{miracle}  Let $I$ be a weakly stable ideal of $A$. Then, for all $j=1,\dots,n$,  
\begin{eqnarray} 
\label{EQ2} \Hilb(H_{\mm_{A_{[j]}}}^0(A_{[j]}/I_{[j]}))  \geq  \Hilb(H_{\mm_{A_{[j]}}}^0(A_{[j]}/\Gin(I)_{[j]})), \text{ and }\\
\label{EQ1}
\Hilb(H_{\mm_{A_{[j]}}}^i(A_{[j]}/I_{[j]}))=  \Hilb(H_{\mm_{A_{[j]}}}^i(A_{[j]}/\Gin(I)_{[j]})) \text{ for all }i>0. 
\end{eqnarray}
\end{proposition}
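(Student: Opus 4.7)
The plan is to exploit that both $I$ and $\Gin(I)$ are weakly stable -- the former by hypothesis, the latter because the reverse-lex generic initial ideal is Borel-fixed, hence $p$-Borel, hence weakly stable. Proposition \ref{seq} then tells us that both $A/I$ and $A/\Gin(I)$ are sequentially Cohen--Macaulay, and so the criterion \eqref{HS-criterion} gives
\[
\Hilb\bigl(H^{i}_{\mm_A}(A/I)\bigr) = \Hilb\bigl(H^{i}_{\mm_A}(A/\Gin(I))\bigr) \qquad \text{for every } i \geq 0,
\]
which I shall refer to as $(\star)$ and which is the main input for both parts.

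For \eqref{EQ1} I would apply formula \eqref{auno} of Lemma \ref{scendiazero} to each of $I$ and $\Gin(I)$, with cohomological index $n-j+i$ and auxiliary index $h=i$ (so that $n-(n-j+i)+i=j$). This expresses
\[
\Hilb\bigl(H^{n-j+i}_{\mm_A}(A/I)\bigr) = \Hilb\bigl(H^{i}_{\mm_{A_{[j]}}}(A_{[j]}/I_{[j]})\bigr) \cdot \Bigl(\sum_{k<0} t^{k}\Bigr)^{n-j},
\]
together with the analogous identity for $\Gin(I)$. By $(\star)$ the left-hand sides agree, and since $(\sum_{k<0}t^{k})^{n-j}$ is a non-zero Laurent series, cancellation yields \eqref{EQ1}.

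For \eqref{EQ2} Lemma \ref{scendiazero} no longer applies, so I would set $M:=A_{[j]}/I_{[j]}$ and $M':=A_{[j]}/\Gin(I)_{[j]}$ and apply the graded Hilbert--Serre identity $\Hilb(N)_{k}-P_{N}(k)=\sum_{i\ge 0}(-1)^{i}\Hilb\bigl(H^{i}_{\mm_{A_{[j]}}}(N)\bigr)_{k}$ to each of $N=M$ and $N=M'$. Subtracting the two identities and using \eqref{EQ1} to kill the contributions of cohomological index $\ge 1$ reduces \eqref{EQ2} to verifying (a) $\Hilb(M)_{k}\ge \Hilb(M')_{k}$ for every $k$, and (b) $P_{M}=P_{M'}$. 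The key input for both is the reverse-lex property that $\Gin_{A}(I)_{[j]}$ coincides with $\Gin_{A_{[j]}}(\bar I)$, where $\bar I$ is a general restriction of $I$ to $A_{[j]}$; combined with invariance of Hilbert series under initial ideals, this gives $\Hilb(M')=\Hilb\bigl(A/(I+(l_{j+1},\dots,l_{n}))\bigr)$ for generic linear forms $l_{j+1},\dots,l_{n}$.

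The main obstacle is (a). Weak stability of $I$ makes $X_{n},X_{n-1},\dots,X_{j+1}$ itself into a filter-regular sequence on $A/I$, so both $A/(I+(X_{j+1},\dots,X_{n}))$ and $A/(I+(l_{j+1},\dots,l_{n}))$ are quotients of $A/I$ by filter-regular sequences of linear forms of the same length; in particular their Hilbert polynomials coincide (being the $(n-j)$-fold finite difference of $P_{A/I}$), which establishes (b). For (a), the Hilbert function $\Hilb\bigl(A/(I+V)\bigr)_{k}$ is upper semi-continuous in the $(n-j)$-dimensional subspace $V\subseteq A_{1}$, so its minimum over the Grassmannian parametrising such $V$ is attained on a dense open set; specialising $V$ to $\mathrm{span}(X_{j+1},\dots,X_{n})$ therefore gives $\Hilb(M)\ge \Hilb(M')$, as required.
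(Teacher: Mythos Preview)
Your argument is correct and follows essentially the same route as the paper: for \eqref{EQ1} you reduce via \eqref{auno} to cohomology on the full ring and invoke sequential Cohen--Macaulayness through \eqref{HS-criterion}, and for \eqref{EQ2} you combine Serre's formula with the Hilbert-function inequality coming from genericity of the linear forms and the Hilbert-polynomial equality coming from filter-regularity. The only cosmetic differences are that the paper handles the trivial case $i>j$ explicitly, and for the Hilbert-polynomial step it compares $A/I$ with $A/\Gin(I)$ (same Hilbert function, same filter-regular sequence $X_n,\dots,X_{j+1}$) rather than comparing the two quotients of $A/I$ by different filter-regular sequences as you do; both variants give the same conclusion.
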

\begin{proof} We first prove \eqref{EQ1}. If $i>j$ there is nothing to prove.
Assume $0< i\leq j$ and observe that, by  Lemma \ref{scendiazero} \eqref{auno}, it is enough to show that $\Hilb\left(H_{\mm_{A}}^{n+i-j}(A/I)\right)=  \Hilb\left(H_{\mm_{A}}^{n+i-j}(A/(\Gin(I)))\right)$; by Proposition \ref{seq}, $I$ is sequentially Cohen-Macaulay and, thus, this is achieved by applying Herzog-Sbarra's Criterion \eqref{HS-criterion}.

Next, we show \eqref{EQ2} and to do so we first recall the following formula due to Serre (see for instance \cite{BrHe} Theorem 4.4.3). 
Let $UP$ be a homogeneous ideal of a standard graded ${\mathbb{K}}$-algebra $R$. Then or every degree $d$,
\begin{equation}
\label{Serre}
\dim_K(R/U)_d -  \HilbPol_{R/U}(d)=\sum_{i \geq 0} (-1)^i \dim_K H_{\mm_{R}}^{i}(R/U)_d, 
\end{equation}
where $\HilbPol_{R/U}$ denotes the Hilbert polynomial of $R/U$. Now, notice that $\Hilb(A_{[j]}/I_{[j]})=\Hilb(A/(I+(X_n,\dots,X_{j+1}))$ and $\Hilb(A_{[j]}/\Gin(I)_{[j]})=\Hilb(A/(\Gin(I)+(X_n,\dots,X_{j+1}))$, since $I$ and $\Gin(I)$ are monomial ideals.
Furthermore, $\Hilb(A/(\Gin(I)+(X_n,\dots,X_{j+1}))=\Hilb(A/(\ini(gI)+(X_n,\dots,X_{j+1}))$ where $g$ is a general linear change of coordinates. By a well-known property of the reverse lexicographic order
the latter is equal to $\Hilb(A/(\ini(gI+ (X_n,\dots,X_{j+1}))).$  Thus, 
$\Hilb(A_{[j]}/\Gin(I)_{[j]})=\Hilb(A/(gI+ (X_n,\dots,X_{j+1})))=\Hilb(A/(I+(l_n,\dots,l_{j+1}))$ for $l_n,\dots,l_{j+1}$ general linear forms.  In particular, since the $l_n,\dots,l_{j+1}$ are general, we have
$\Hilb(A/(I+(X_n,\dots,X_{j+1})) \geq \Hilb(A/(I+(l_n,\dots,l_{j+1}))$, which now yields
$$\Hilb(A_{[j]}/I_{[j]})\geq \Hilb(A_{[j]}/\Gin(I)_{[j]}).$$
Hence, by \eqref{EQ1} and \eqref{Serre}, we are left to prove that $A_{[j]}/I_{[j]}$ and  $A_{[j]}/\Gin(I)_{[j]}$ have the same Hilbert polynomial or, equivalently, it is enough to verify that for all $d$ sufficiently large, $(A_{[j]}/I_{[j]})_d$ and $(A_{[j]}/\Gin(I)_{[j]})_d$ have the same dimension. To this purpose, we just need to observe that $A/I$ and $A/\Gin(I)$ have the same Hilbert function  and  that $X_n,\dots,X_{j+1}$ is a filter-regular sequence on both rings, since $I$ and $\Gin(I)$ are weakly stable.
 \end{proof}

From the definition of regularity via local cohomology modules, we derive immediately the following corollary.
\begin{corollary}\label{regmiracle}
Let $I$ be a weakly stable ideal of $A$. Then, for all $j=1,\dots,n$,
$$\reg A_{[j]}/I_{[j]} \geq  \reg A_{[j]}/\Gin(I)_{[j]}.$$ 
\end{corollary}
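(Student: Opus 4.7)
The plan is simply to invoke the characterization of Castelnuovo-Mumford regularity via the local cohomology modules together with Proposition \ref{miracle}. Recall that for any finitely generated graded module $M$ over $A_{[j]}$, one has
\[
\reg M=\max_{i\geq 0}\{\End(H^i_{\mm_{A_{[j]}}}(M))+i\},
\]
so the corollary will follow once we check that each summand on the right is non-increasing when we replace $A_{[j]}/I_{[j]}$ by $A_{[j]}/\Gin(I)_{[j]}$.

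First I would apply Proposition \ref{miracle} \eqref{EQ1}: for every $i>0$ the graded Hilbert functions of $H^i_{\mm_{A_{[j]}}}(A_{[j]}/I_{[j]})$ and $H^i_{\mm_{A_{[j]}}}(A_{[j]}/\Gin(I)_{[j]})$ coincide, and therefore their $\End$ values are equal. Next, for $i=0$, I would use \eqref{EQ2} of the same Proposition: pointwise dimensions of $H^0_{\mm_{A_{[j]}}}(A_{[j]}/I_{[j]})_d$ dominate those of $H^0_{\mm_{A_{[j]}}}(A_{[j]}/\Gin(I)_{[j]})_d$ for every $d$. Since both modules are Artinian (in particular, supported in finitely many degrees), this pointwise domination is enough to conclude
\[
\End\bigl(H^0_{\mm_{A_{[j]}}}(A_{[j]}/I_{[j]})\bigr)\geq \End\bigl(H^0_{\mm_{A_{[j]}}}(A_{[j]}/\Gin(I)_{[j]})\bigr),
\]
simply because if the right-hand module is non-zero in degree $d_0$, so is the left-hand one.

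Finally, taking the maximum in $i$ of the two sets $\{\End(H^i)+i\}$ yields the stated inequality. There is no real obstacle: all the work has already been done in Proposition \ref{miracle}; the corollary is a straightforward bookkeeping argument about how $\reg$ is assembled from local cohomology, coupled with the monotonicity of $\End$ under pointwise domination of Hilbert functions.
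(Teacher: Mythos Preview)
Your proposal is correct and follows exactly the approach the paper intends: the authors merely state that the corollary is derived ``immediately'' from the definition of regularity via local cohomology together with Proposition~\ref{miracle}, and you have spelled out precisely that argument. There is nothing to add.
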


We are now ready to prove the following theorem. 

\begin{theorem}[Castelnuovo-Mumford regularity and general restrictions] \label{restr} Let $I$ be a homogeneous ideal of $A$ and let $l_n,\dots,l_{i+1}$, $0\leq i< n$, general linear forms. Then, 
$$\reg A/(I+(l_n,\dots,l_{i+1})) \geq
\reg A/(\Gin_0(I)+(X_n,\dots,X_{i+1}))= \reg A_{[i]}/\Gin_0(I)_{[i]}.$$
\end{theorem}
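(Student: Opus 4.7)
The equality $\reg A/(\Gin_0(I)+(X_n,\dots,X_{i+1}))=\reg A_{[i]}/\Gin_0(I)_{[i]}$ is automatic from the fact that $\Gin_0(I)$ is a monomial ideal; only the inequality needs proof. The strategy is to reduce both sides to computations involving weakly stable ideals in $A_\QQ$, so that Corollary \ref{regmiracle} can be invoked directly.

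First, I would use the remark preceding the theorem (see \eqref{GIN-REG}) to rewrite the left-hand side: for a general restriction $J$ of $I$ to $A_{[i]}$, one has
$\reg A/(I+(l_n,\dots,l_{i+1}))=\reg A_{[i]}/J=\reg A_{[i]}/\Gin(I)_{[i]}$,
so the problem reduces to proving $\reg A_{[i]}/\Gin(I)_{[i]}\geq \reg A_{[i]}/\Gin_0(I)_{[i]}$. Second, the ideal $\Gin(I)$ is Borel-fixed and hence weakly stable in $A$, so Remark \ref{lasappiamolunga}(d) gives that $\Gin(I)_{[i]}$ is weakly stable in $A_{[i]}$; Corollary \ref{regbase} then yields $\reg A_{[i]}/\Gin(I)_{[i]}=\reg (A_\QQ)_{[i]}/(\Gin(I)_\QQ)_{[i]}$, where I use that weak stability is a purely monomial condition to identify $(\Gin(I)_{[i]})_\QQ$ with $(\Gin(I)_\QQ)_{[i]}$.

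Third, I would apply Corollary \ref{regmiracle} with $j=i$ to the weakly stable ideal $\Gin(I)_\QQ\subseteq A_\QQ$ to obtain
$\reg (A_\QQ)_{[i]}/(\Gin(I)_\QQ)_{[i]}\geq \reg (A_\QQ)_{[i]}/\Gin(\Gin(I)_\QQ)_{[i]}.$
By the very definition of the zero-generic initial ideal, $\Gin(\Gin(I)_\QQ)$ shares its monomial generators with $\Gin_0(I)$, so $\Gin(\Gin(I)_\QQ)_{[i]}=(\Gin_0(I)_{[i]})_\QQ$. Finally, $\Gin_0(I)_{[i]}$ is strongly stable by Proposition \ref{PRO}\eqref{sameh}, so its regularity equals its maximum generating degree and is in particular field-independent; thus $\reg (A_\QQ)_{[i]}/(\Gin_0(I)_{[i]})_\QQ=\reg A_{[i]}/\Gin_0(I)_{[i]}$. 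Chaining the three (in)equalities closes the argument.

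No substantial obstacle is anticipated: all of the real work sits in Corollary \ref{regmiracle}, which in turn rests on Theorem \ref{PardueCo} and on Proposition \ref{seq}. The only delicate point at this stage is the bookkeeping, namely keeping straight the two successive layers of $\Gin$ together with the base change between $\mathbb{K}$ and $\QQ$ that is built into the definition of $\Gin_0(-)$, and exploiting twice the field-independence of regularity — once for weakly stable ideals (Corollary \ref{regbase}) to move from $\mathbb{K}$ to $\QQ$, and once for strongly stable ideals to move back.
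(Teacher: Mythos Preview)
Your proof is correct and follows essentially the same route as the paper: rewrite the left-hand side via \eqref{GIN-REG} as $\reg A_{[i]}/\Gin(I)_{[i]}$, use Corollary~\ref{regbase} to pass to $A_\QQ$, apply Corollary~\ref{regmiracle} to the weakly stable ideal $\Gin(I)_\QQ$, and then identify the result with $\Gin_0(I)_{[i]}$ using the definition and the field-independence of regularity for strongly stable ideals. The only cosmetic difference is that the paper invokes Corollary~\ref{regbase} a second time for the right-hand side (since strongly stable implies weakly stable), whereas you appeal directly to the Eliahou--Kervaire description; both are fine.
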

\begin{proof} We already motivated the validity of the second equality and are left to prove the above inequality. First, by \eqref{GIN-REG}, 
$\reg A/(I+(l_n,\dots,l_{i+1}))=\reg  A/(\Gin(I)+(X_n,\dots,X_{i+1}))$ and, since $\Gin(I)$ is weakly stable,  Corollary \ref{regbase} implies that the left-hand side of the inequality  is equal to 
$\reg  A_\QQ/(\Gin(I)_\QQ+(X_n,\dots,X_{i+1}))$. On the other hand, $\reg A/(\Gin_0(I)+(X_n,\dots,X_{i+1}))=
\reg A/(\Gin(\Gin(I)_\QQ)_{\mathbb{K}}+(X_n,\dots,X_{i+1}))$ and, again by Corollary \ref{regbase}, equal to
$\reg A_\QQ/(\Gin(\Gin(I)_\QQ)+(X_n,\dots,X_{i+1}))$. The conclusion is now a straightforward consequence of Corollary
\ref{regmiracle}, applied to the ideal $\Gin(I)_\QQ$ in the ring $A_\QQ$.
\end{proof}
\section{Bounds for the Castelnuovo-Mumford regularity}
In this final section, we provide two applications of the results we proved so far.

\subsection{Lower bounds}
Recently, in \cite{CiLeMaRo}, lower bounds for the Castelnuovo-Mumford regularity of saturated ideals with fixed Hilbert polynomial have been proven in characteristic zero, but the assumption on the characteristic can now be dropped: Part \eqref{sameh} and \eqref{samereg} of Proposition \ref{PRO}, and Corollary \ref{extremalbase} yield the following remark, which, in turn, implies the next theorem.

\begin{remark} Let $\mathbb{K}$ and $\mathbb{F}$ be any two fields. Given a homogeneous ideal $I\subset A_{\mathbb K}=\mathbb{K}[X_1,\dots,X_n]$ there exists a strongly stable ideal of $J\subset A_{\mathbb F}=\mathbb{F}[X_1,\dots,X_n]$ such that $I$ and $J$ have same Hilbert function, extremal Betti numbers and, therefore, same projective dimension and Castelnuovo-Mumford regularity. Since $I$ and $J$ have the same projective dimension, $I$ is saturated if and only if $J$ is saturated.
\end{remark}

\begin{theorem}\label{cioffi}
\cite[Theorem A]{CiLeMaRo} holds in any characteristic.
\end{theorem}

\subsection{Upper bounds} We provide, as an application of $\Gin_0(-)$,  a  new characteristic-free proof of a well-known doubly exponential bounds for the Castelnuovo-Mumford regularity of an ideal in terms of its generating degree. By Theorem \ref{restr}, our line of reasoning follows now closely that of Galligo's original proof for the characteristic zero case \cite{Ga}.\\

Let $I$ be a non-zero homogeneous ideal of $A$. We denote by $D(I)$ the generating degree of $I$, i.e. the maximum degree of a minimal generator of $I$; we also let $\mu(I)$ be the number of minimal generators of $I$. In particular,  
$$\mu(I)=\sum_{j}\beta_{0j}(I) \text{\;\;\; and\;\;\; } D(I):= \max \{j \: \beta_{0j}(I)\not = 0 \}\leq \reg I.$$

The following lemma is a straightforward consequence of the combinatorial properties of strongly stable ideals, 
see again \cite{CaSb}, Proposition 1.6, for a generalization to weakly stable ideals.
\begin{lemma} \label{PROD} Let $I$ be a strongly stable ideal of $A={\mathbb{K}}[X_1,\dots,X_n]$. Then
\[\mu(I)\leq \prod_{i=1}^{n-1} (D(I_{[i]})+1).\]
\end{lemma}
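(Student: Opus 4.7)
The plan is to bound $\mu(I)$ by constructing an injective map from the set of minimal monomial generators of $I$ into the combinatorial cube $\prod_{i=1}^{n-1} \{0, 1, \ldots, D(I_{[i]})\}$, which has cardinality $\prod_{i=1}^{n-1}(D(I_{[i]}) + 1)$. The map sends a minimal generator $u = X_1^{a_1} \cdots X_n^{a_n}$ to its truncated exponent vector $(a_1, \ldots, a_{n-1})$. Injectivity is immediate and does not require the strong stability of $I$: if two distinct minimal generators shared the first $n-1$ exponents, they would differ only in the exponent of $X_n$, and hence one would divide the other, contradicting minimality.

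The heart of the argument will be to prove the bound $a_i \leq D(I_{[i]})$ for each $i \in \{1, \ldots, n-1\}$, via a short case analysis. If $a_j = 0$ for every $j > i$, then $u$ already lies in $A_{[i]}$ and remains a minimal generator of $I_{[i]}$ (any proper divisor in $I_{[i]}$ would already sit in $I$), so $a_i \leq \deg u \leq D(I_{[i]})$. Otherwise, I would introduce the threshold $t := \min\{s \geq 0 : X_1^{a_1} \cdots X_{i-1}^{a_{i-1}} X_i^s \in I_{[i]}\}$. Iterated applications of the strongly stable exchange, each replacing an $X_j$ with $j > i$ occurring in $u$ by $X_i$, eventually transport $u$ into $I_{[i]}$; this guarantees that $t$ exists and is bounded above by $a_i + a_{i+1} + \cdots + a_n$. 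The minimality of $u$ then forces $t > a_i$: if instead $t \leq a_i$, the monomial $w := X_1^{a_1} \cdots X_{i-1}^{a_{i-1}} X_i^t$ would be a proper divisor of $u$ already in $I$, contradicting that $u$ generates minimally. Finally, by the minimality of $t$, the monomial $w/X_i$ is not in $I_{[i]}$, so every minimal generator of $I_{[i]}$ dividing $w$ must involve the full factor $X_i^t$, and hence has degree at least $t$. Combining, $D(I_{[i]}) \geq t \geq a_i + 1$, which is even slightly stronger than what is needed.

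The main obstacle is isolating the correct invariant of $u$ to bound: naive choices such as $\sum_{j \geq i} a_j$ or $\deg u$ itself are not controlled by $D(I_{[i]})$. The threshold $t$ defined above is precisely the right bridge between strong stability (which supplies $t$) and the minimal-generator hypothesis on $u$ (which forces $t > a_i$), and pairing them produces the desired minimal generator of $I_{[i]}$ of degree at least $a_i + 1$. Once $a_i \leq D(I_{[i]})$ is established for all $i \in \{1, \ldots, n-1\}$, composing the injection with the size of the cube yields $\mu(I) \leq \prod_{i=1}^{n-1}(D(I_{[i]}) + 1)$.
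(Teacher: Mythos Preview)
Your proof is correct and is precisely the ``straightforward consequence of the combinatorial properties of strongly stable ideals'' that the paper alludes to (the paper does not spell out a proof, referring instead to \cite{CaSb}, Proposition~1.6). The injection into the box $\prod_{i=1}^{n-1}\{0,\dots,D(I_{[i]})\}$ via truncated exponent vectors, together with the exchange property to manufacture a minimal generator of $I_{[i]}$ of degree at least $a_i+1$, is the standard argument.
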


In the proof of the following theorem we shall denote by $U_{\langle j \rangle}$, for $1\leq i\leq n$ and $1\leq j\leq i$ a general restriction to $A_{[j]}$ of a homogeneous ideal $U$ of $A_{[i]}$, so that $U_{\langle n \rangle}=U$. We notice that, by Remark \ref{needed}, $\Gin_0(U_{\langle j \rangle})$ is well-defined.

\begin{theorem} Let $I$ be a homogeneous ideal of $A={\mathbb{K}}[X_1,\dots,X_n],$ with $n \geq 2$. Then
\[\reg I\leq (2D(I))^{2^{n-2}}.\]
\end{theorem}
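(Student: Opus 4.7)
My approach would be to adapt the characteristic-zero argument of \cite{CaSb} using $\Gin_0(I)$ in place of $\Gin(I)$. The key ingredients are the Crystallization Principle for $\Gin_0$ (Theorem~\ref{crys}), the bound on the number of generators of strongly stable ideals (Lemma~\ref{PROD}), and the control of general restrictions via Theorem~\ref{restr}, combined via induction on $n$.

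First, I would reduce the statement to a bound on a generating degree. By Proposition~\ref{PRO}~(\ref{samereg}), $\reg I = \reg \Gin_0(I)$; setting $J := \Gin_0(I)$, the ideal $J$ is strongly stable by Proposition~\ref{PRO}~(\ref{sameh}), hence $\reg J = D(J)$. It therefore suffices to bound $D(J)$.

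Next, I would apply Theorem~\ref{crys} iteratively. If $D(J) > D(I)$, the contrapositive of Theorem~\ref{crys} with parameter $d = D(J)-1, D(J)-2, \ldots$ forces $J$ to possess a minimal generator in every integer degree strictly between $D(I)$ and $D(J)$, so
\[
D(J) \;\leq\; D(I) + \mu(J).
\]
I would then bound $\mu(J)$ via Lemma~\ref{PROD} by the product $\prod_{i=1}^{n-1} (D(J_{[i]}) + 1)$. To control each factor, I use that $J_{[i]}$ is strongly stable, so $\reg J_{[i]} = D(J_{[i]})$, and invoke Theorem~\ref{restr} to obtain
\[
D(J_{[i]}) \;\leq\; \reg \bar{I}_i,
\]
where $\bar{I}_i \subset A_{[i]}$ is a general restriction of $I$ to $A_{[i]}$, and clearly $D(\bar{I}_i) \leq D(I)$.

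Finally, I would induct on $n$. The base case $n=2$ follows directly since $J_{[1]}$ is principal with $D(J_{[1]}) \leq D(I)$, giving $\mu(J) \leq D(I)+1$ and hence $D(J) \leq 2D(I)+O(1)$. For the inductive step with $n \geq 3$, the inductive hypothesis applied to $\bar{I}_i$ in the polynomial ring $A_{[i]}$ (in $i < n$ variables) gives $\reg \bar{I}_i \leq (2D(I))^{2^{i-2}}$ for $2 \leq i \leq n-1$, while $D(J_{[1]}) \leq D(I)$ handles $i=1$. Substituting into $\mu(J) \leq \prod_{i=1}^{n-1}(D(J_{[i]})+1)$ and exploiting the telescoping identity $\sum_{i=2}^{n-1} 2^{i-2} = 2^{n-2}-1$ yields the double-exponential exponent $2^{n-2}$. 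The main obstacle is not conceptual but arithmetical: verifying that the multiplicative $+1$ corrections in Lemma~\ref{PROD}, the additive $D(I)$ from Crystallization, and the factor $D(J_{[1]})+1$ from the $i=1$ slot can all be absorbed into the base $2D(I)$ without inflating the exponent past $2^{n-2}$, which is a matter of routine estimation provided $D(I) \geq 1$.
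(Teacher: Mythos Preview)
Your approach is essentially that of the paper: reduce to bounding $D(\Gin_0(I))$, combine Crystallization (Theorem~\ref{crys}), Lemma~\ref{PROD}, and Theorem~\ref{restr}, and obtain the double-exponential bound; the paper phrases the last step as a recursion $B_1=D(I)$, $B_j=D(I)-1+\prod_{i<j}(B_i+1)$ with $B_j\leq B_{j-1}^2$, which is exactly your induction on $n$ unrolled. Two small corrections make the arithmetic close exactly: the Crystallization count gives $D(J)\leq D(I)+\mu(J)-1$ (since $J$ also has a generator in degree $\leq D(I)$), and you need $D(I)\geq 2$, not $\geq 1$, for the estimates to go through without the $O(1)$ slack---the case $D(I)\leq 1$ being trivial.
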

\begin{proof} The statement is trivial if $D(I)\leq 1$ and, thus, we may assume $D(I)\geq 2$. Let $J$ be $\Gin_0(I)$, and recall that by Proposition \ref{PRO} \eqref{samereg}, $\reg I=\reg J.$ Since $J$ is strongly stable, its regularity is equal to $D(J)$ and, for the same reason,  $\reg J_{[i]} =D(J_{[i]})$ for all $1\leq i \leq n$.

 Let now $J_{(i)}$ denote the ideal  $\Gin_0(I_{\langle i \rangle})$,  for all $1\leq i <n$. 
By Theorem \ref{restr}, $D(J_{[i]})$ is bounded above by $\reg I_{\langle i \rangle}$ and, for $i\leq j$,  $D((J_{(j)})_{[i]})\leq \reg (I_{\langle j \rangle})_{\langle i \rangle}= \reg I_{\langle i \rangle}$. 
Together with  Lemma \ref{PROD}, this implies 
$$ \mu (J_{(j)}) \leq  \prod_{i=1}^{j-1} (D((J_{(j)})_{[i]})+1)\leq  \prod_{i=1}^{j-1} (\reg I_{\langle i \rangle}+1).$$
By Proposition \ref{PRO} \eqref{samereg},  $$\reg I_{\langle j \rangle} = \reg \Gin_0(I_{\langle j\rangle}) = D(J_{(j)}),$$ 
and, furthermore,
\begin{equation}\label{LAST?}
 D(J_{(j)})
\leq D(I_{\langle j \rangle})+\mu(J_{(j)})-1
\leq D(I)+\mu(J_{(j)})-1 
\leq D(I)-1 + \prod_{i=1}^{j-1} (\reg I_{\langle i \rangle}+1),
\end{equation}
where the first inequality is a straightforward application of the Crystallization Principle, see Theorem \ref{crys}.

As in the proof of \cite{CaSb}, Corollary 1.8, we set $B_1=D(I)$, and recursively $B_j=D(I)-1+ \prod_{i=1}^{j-1}(B_i+1)$, for all $1<j\leq n$. It is easy to
see that $B_j\leq B_{j-1}^2$  and,  thus,  $B_j\leq (B_2)^{2^{j-2}}$, for all $j\geq 2$. 
An easy induction together with \eqref{LAST?} implies that $\reg I_{\langle j \rangle} \leq B_j$, for all $1\leq j\leq n$. 
Hence, $\reg I=\reg I_{\langle n \rangle}$ is  bounded above by $(B_2)^{2^{n-2}}=(2D(I))^{2^{n-2}}$, as desired.
\end{proof}


\end{document}